\newtheorem{theorem}{Theorem}[section]
\newtheorem{lemma}[theorem]{Lemma}
\newtheorem{conjecture}[theorem]{Conjecture}
\newtheorem{corollary}[theorem]{Corollary}
\newtheorem{remark}[theorem]{Remark}
\renewcommand\proofname{\bf{Proof}}
\title{\bf  Eigenvalue bounds for combinatorial Laplacians and an application to random complexes}
\author{
	Xiongfeng Zhan$^{a,b}$,  \ Xueyi Huang$^{b,}$\thanks{Corresponding author.}, \ Jin-Xin Zhou$^{a}$ \setcounter{footnote}{-1}\footnote{\textit{Email address:} zhanxfmath@163.com (X. Zhan), huangxy@ecust.edu.cn (X. Huang), jxzhou@bjtu.edu.cn (J.-X. Zhou).}\\[2mm]
	\small $^a$School of Mathematics and Statistics, Beijing Jiaotong University,  \\
	\small  Beijing, 100044, P. R. China\\
	\small $^b$School of Mathematics, East China University of Science and Technology, \\
	\small  Shanghai 200237, P. R. China
}
\date{}
\begin{document}
	\maketitle
	\begin{abstract}
		
		This paper establishes new eigenvalue bounds for combinatorial Laplacians of simplicial complexes, extending previous results for flag complexes by Lew (2024) and general complexes by Shukla and Yogeshwaran (2020). Using elementary matrix-theoretic methods, we derive lower bounds for the eigenvalues of the combinatorial Laplacian in terms of the graph Laplacian spectrum and combinatorial parameters that measure the deviation from a flag complex. As a consequence, we obtain upper bounds on the dimension of cohomology groups. We also generalize an eigenvalue comparison inequality between a simplicial complex and its subcomplexes to arbitrary eigenvalues. As an application of the dimension bounds, we refine a result by Kahle (2007) on the vanishing of cohomology and connectivity in the neighborhood complex of the Erd\H{o}s--R\'{e}nyi random graph.

		\par\vspace{2mm}
		
		\noindent{\bfseries Keywords:} simplicial complex, combinatorial Laplacian, cohomology, neighborhood complex, Erd\H{o}s--R\'{e}nyi random graph
		\par\vspace{1mm}
		
		\noindent{\bfseries 2010 MSC:} 05E45
	\end{abstract}

	\section{Introduction}\label{section::1}

	Let $G=(V,E)$ be a finite graph. The \textit{Laplacian} on $G$ is the matrix $L(G)\in \mathbb{R}^{V\times V}$ defined by 
	\[
	L(G)_{u,v}=
	\begin{cases}
		\deg_G(u)&\text{if $u=v$},\\
		-1  &\text{if $\{u,v\}\in E$},\\
		0  &\text{otherwise},
	\end{cases}
	\]
	for all $u,v\in V$, where $\deg_G(u)=\left|\{v\in V\mid \{u,v\}\in E\}\right|$ is the degree of $u$ in $G$. It is known that $L(G)$ is a singular positive semi-definite matrix. 
	
	Let $V$ be a finite vertex set. A  \textit{simplicial complex} $X$ on the vertex set $V$, is a collection of  subsets of $V$ that satisfies the following two conditions: 
	\begin{enumerate}[(1)]
		\item For every $v\in V$, the singleton set $\{v\}$ is in $X$.
		
		\item If $\sigma$ is in $X$, then any subset $\sigma'$ of $\sigma$ is also in $X$.
	\end{enumerate}
	An element $\sigma$ of $X$ is called a \textit{face} of $X$, and its dimension is defined as $|\sigma|-1$. A \textit{subcomplex} $X'$ of $X$ is a simplicial complex whose faces are contained in $X$. For $\sigma\in X$, the \textit{link} of $\sigma$ is the subcomplex of $X$ defined by  
	\[\operatorname{lk}_X(\sigma)=\{\tau\in X \mid \sigma\cup\tau\in X \text{ and } \sigma\cap\tau=\emptyset\}.\]
	For $k\geq -1$, let $X(k)$ denote the set of all $k$-dimensional faces of $X$, and let $f_k(X)=|X(k)|$. For $\sigma \in X(k)$, the \textit{degree} of $\sigma$ in $X$ is defined as 
	\[\deg_X(\sigma) = \left|\{\tau \in X(k+1) \mid \sigma \subset \tau\}\right|.\] 
	The $k$-\textit{skeleton} of $X$ is the subcomplex consisting of all faces of dimension at most $k$ in $X$. In particular, we denote by $G_X$ the $1$-skeleton of $X$, viewed as a graph. Furthermore, we denote by $\widetilde{H}^k(X)$ (resp. $\widetilde{H}^k(X;\mathbb{R})$) the $k$-th (reduced) cohomology group with integer (resp. real) coefficients, and by $\widetilde{H}_k(X)$ (resp. $\widetilde{H}_k(X;\mathbb{R})$) the $k$-th (reduced) homology group of $X$ with integer (resp.  real) coefficients (see \cite{Hat02} for definitions).

	Let $<$ be a fixed linear order on the vertex set $V$. 
	For $\sigma\in X(k-1)$ and $\tau\in X(k)$ satisfying $\sigma\subset \tau$, we denote 
	\[(\tau:\sigma)=(-1)^{|\{v\in \tau\mid v<u\}|},\] 
	where $u$ is the unique vertex in $\tau\setminus \sigma$. 
	For $k\geq 0$, the \textit{$k$-th real boundary map} of $X$ is the matrix $\partial_k(X)\in \mathbb{R}^{X(k-1)\times X(k)}$ defined by 
	\[
	\partial_k(X)_{\sigma,\tau}=\begin{cases}
		(\tau:\sigma)&\text{if $\sigma\subset \tau$},\\
		0& \text{otherwise},
	\end{cases}
	\]
	for all $\sigma\in X(k-1)$ and $\tau\in X(k)$. The \textit{$k$-dimensional combinatorial Laplacian} of $X$ is the matrix $L_k(X)\in \mathbb{R}^{X(k)\times X(k)}$  defined by 
	\[
	L_k(X) =\partial_{k+1}(X)\partial_{k+1}(X)^T+ \partial_k(X)^T\partial_k(X).
	\]
	By definition, $L_k(X)$ is a positive semi-definite matrix. When $k=0$, it is routine to verify that $L_0(X)=L(G_X)+J$, where $J\in \mathbb{R}^{V\times V}$ is the all-ones matrix.

	For any matrix $M \in \mathbb{R}^{n \times n}$ with real eigenvalues and $1 \leq i \leq n$, we denote by $\lambda_i(M)$ the $i$-th smallest eigenvalue of $M$. For any $k\geq 0$, let $S_{k,i}(M)$ denote the $i$-th smallest element of the multiset
	\[
	\left\{ \sum_{j \in A} \lambda_j(M) \mid A \in \binom{[n]}{k} \right\}.
	\]
	
	As a discrete analogue of the Hodge Laplacian, the concept of combinatorial Laplacians for simplicial complexes was introduced by Eckmann \cite{Eck44} in 1944. Over the past several decades, the spectra of combinatorial Laplacians have been studied extensively for various well-known families of simplicial complexes \cite{DR02,Gar73,KRS02}. For a general framework on this topic, we refer the reader to \cite{HJ13}. 
	
	In his seminal work \cite{Eck44}, Eckmann established the classical simplicial Hodge theorem, which builds a relationship between the cohomology groups and the combinatorial Laplacians of simplicial complexes.
	
	\begin{theorem}[Simplicial Hodge theorem, \cite{Eck44}]\label{thm::Hodge}
		For a simplicial complex $X$, we have
		\[\widetilde{H}^k(X;\mathbb{R})\cong \ker L_k(X).\]
	\end{theorem}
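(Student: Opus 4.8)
The plan is to carry out the standard Hodge-theoretic argument, which in this finite-dimensional combinatorial setting amounts to elementary linear algebra. First I would set up cochains: for each $k\ge -1$ equip the space $C^k := \mathbb{R}^{X(k)}$ with the inner product making the faces an orthonormal basis, so that the adjoint of $\partial_k(X)$ is literally its transpose. Under this identification the reduced cochain complex is $\cdots \xrightarrow{\ \partial_k(X)^T\ } C^{k} \xrightarrow{\ \partial_{k+1}(X)^T\ } C^{k+1} \to \cdots$, writing $\delta^{k} := \partial_{k+1}(X)^T$ for the coboundary (the face $X(-1)=\{\emptyset\}$ is what makes this the \emph{reduced} complex, and it is exactly this bottom term that produces $\partial_0(X)^T\partial_0(X)=J$ in $L_0(X)=L(G_X)+J$). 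With this notation $\widetilde{H}^k(X;\mathbb{R}) = \ker\delta^k/\operatorname{im}\delta^{k-1}$ and
\[
L_k(X) = (\delta^k)^T\delta^k + \delta^{k-1}(\delta^{k-1})^T .
\]

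Second, I would record the linear-algebra fact that a sum of two positive semi-definite matrices has kernel equal to the intersection of the two kernels: for $x\in C^k$,
\[
x^T L_k(X)\, x = \|\delta^k x\|^2 + \|(\delta^{k-1})^T x\|^2 ,
\]
which vanishes if and only if both $\delta^k x = 0$ and $(\delta^{k-1})^T x = 0$. Since $\ker\!\big((\delta^k)^T\delta^k\big)=\ker\delta^k$ and $\ker\!\big(\delta^{k-1}(\delta^{k-1})^T\big)=\ker(\delta^{k-1})^T=(\operatorname{im}\delta^{k-1})^\perp$, this gives
\[
\ker L_k(X) \;=\; \ker\delta^k \,\cap\, (\operatorname{im}\delta^{k-1})^\perp .
\]

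Third, from $\delta^k\delta^{k-1} = \partial_{k+1}(X)^T\partial_k(X)^T = \big(\partial_k(X)\partial_{k+1}(X)\big)^T = 0$ we get $\operatorname{im}\delta^{k-1}\subseteq\ker\delta^k$, so the ambient orthogonal decomposition of $C^k$ restricts to $\ker\delta^k = \operatorname{im}\delta^{k-1}\,\oplus\,\big(\ker\delta^k\cap(\operatorname{im}\delta^{k-1})^\perp\big)$. Hence the quotient map identifies $\widetilde{H}^k(X;\mathbb{R})=\ker\delta^k/\operatorname{im}\delta^{k-1}$ with the complementary summand $\ker\delta^k\cap(\operatorname{im}\delta^{k-1})^\perp=\ker L_k(X)$, which is the asserted isomorphism. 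The only points needing care are bookkeeping rather than conceptual: verifying that the chosen signs $(\tau:\sigma)$ are precisely those for which $\partial_k(X)\partial_{k+1}(X)=0$ (so that the inclusion $\operatorname{im}\delta^{k-1}\subseteq\ker\delta^k$ holds, without which there is no well-defined cohomology at all), and handling the reduced normalization at $k=0$; there is no real obstacle beyond these.
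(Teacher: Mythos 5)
Your proof is correct, and it is the standard finite-dimensional Hodge decomposition argument. The paper itself does not prove this statement — it cites it as Eckmann's classical theorem — so there is no internal proof to compare against; but your argument is exactly the expected one: from $L_k(X)$ being a sum of two positive semidefinite matrices you get $\ker L_k(X) = \ker\delta^k \cap \ker(\delta^{k-1})^T = \ker\delta^k \cap (\operatorname{im}\delta^{k-1})^\perp$, and since $\operatorname{im}\delta^{k-1}\subseteq\ker\delta^k$, the orthogonal complement inside $\ker\delta^k$ is a canonical set of representatives for $\ker\delta^k/\operatorname{im}\delta^{k-1}=\widetilde{H}^k(X;\mathbb{R})$. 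Your bookkeeping at $k=0$ (that $X(-1)=\{\emptyset\}$ gives $\partial_0(X)^T\partial_0(X)=J$ and makes the complex reduced) is also correct and is precisely what the paper uses when asserting $L_0(X)=L(G_X)+J$.
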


	By Theorem \ref{thm::Hodge}, establishing lower bounds for the eigenvalues of the combinatorial Laplacians of simplicial complexes is particularly important, as such bounds usually yield estimates for the dimensions of the cohomology groups.

	A simplicial complex $X$ is called a \textit{flag complex} if it consists of all cliques of its underlying graph $G_X$. In 2005, Aharoni, Berger, and Meshulam \cite{ABM05} established a recursive inequality for the smallest eigenvalues of  the combinatorial Laplacians of a flag complex $X$ on $n$ vertices:
	\[
	k\lambda_1(L_k(X)) \geq (k+1)\lambda_1(L_{k-1}(X)) - n, \quad \text{for all } k \geq 1.
	\]
	A principal consequence of this inequality is the bound 
	\[
	\lambda_1(L_k(X)) \geq (k+1)\lambda_1(L_0(X)) - kn.
	\]
	By Theorem \ref{thm::Hodge}, this implies that if $G_X$ satisfies $\lambda_2(L(G_X)) = \lambda_1(L_0(X)) > \frac{k}{k+1}n$, then $\tilde{H}^k(X; \mathbb{R}) = 0$. 
	
	Inspired by the work of Aharoni, Berger, and Meshulam \cite{ABM05}, Lew \cite{Lew20} proved that for any simplicial complex $X$ on $n$ vertices whose missing faces have maximum dimension $d$, 
	\[
	(k - d + 1)\lambda_1(L_k(X)) \geq (k + 1)\lambda_1(L_{k-1}(X)) - dn, \quad \text{for all } k \geq d.
	\]
	Using this inequality, Lew showed that if $\lambda_1(L_{d-1}(X)) > \left(1 - \binom{k+1}{d}^{-1}\right)n$, then $\tilde{H}^j(X; \mathbb{R}) = 0$ for all integers $j$ satisfying $d-1 \leq j \leq k$. 
	
	More recently, Lew \cite{Lew24} derived the following lower bounds on the eigenvalues of the combinatorial Laplacians of a flag complex $X$ on $n$ vertices using additive compound matrices.
	
	\begin{theorem}[{\cite[Theorem 1.3]{Lew24}}]\label{thm::Lew}
		Let $X$ be a flag complex on $n$ vertices. For $k \geq 0$,
		\[
		\lambda_i(L_k(X)) \geq S_{k+1,i}(L(G_X) + J) - kn.
		\] 
	\end{theorem}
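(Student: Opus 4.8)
The plan is to realize $L_k(X)$ as $nI$ minus a positive semidefinite matrix which, up to a harmless nonnegative diagonal perturbation, is a principal submatrix of the $(k+1)$-st additive compound $L(\overline{G_X})^{[k+1]}$ of the Laplacian of the complement graph $\overline{G_X}$; the bound will then drop out of Cauchy interlacing together with the identity $L_0(X)=nI-L(\overline{G_X})$. \emph{Step 1 (comparison with the full simplex).} Let $\Delta$ be the full simplex on $V$. Because every subface of a face of $X$ is a face of $X$, the matrix $\partial_k(X)^T\partial_k(X)$ is exactly the principal submatrix of $\partial_k(\Delta)^T\partial_k(\Delta)$ indexed by $X(k)$. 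Moreover $L_k(\Delta)=nI$: its diagonal entries are $(k+1)+\deg_\Delta(\sigma)=n$ and its off-diagonal entries vanish since $\sigma\cup\sigma'$ is always a face of $\Delta$. Hence $\partial_k(\Delta)^T\partial_k(\Delta)=nI-\partial_{k+1}(\Delta)\partial_{k+1}(\Delta)^T$; restricting to the index set $X(k)$ and adding $\partial_{k+1}(X)\partial_{k+1}(X)^T$ gives $L_k(X)=nI-M$, where $M$ is the difference between the principal submatrix of $\partial_{k+1}(\Delta)\partial_{k+1}(\Delta)^T$ indexed by $X(k)$ and $\partial_{k+1}(X)\partial_{k+1}(X)^T$. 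A direct computation gives $M_{\sigma,\sigma'}=\sum_{\eta}(\eta:\sigma)(\eta:\sigma')$, the sum over $(k+1)$-faces $\eta$ of $\Delta$ with $\sigma\cup\sigma'\subseteq\eta$ and $\eta\notin X$; in particular $M$ is a Gram matrix, so $M\succeq 0$, with $M_{\sigma,\sigma}=n-k-1-\deg_X(\sigma)$, and for $\sigma\neq\sigma'$ one has $M_{\sigma,\sigma'}\neq 0$ only if $|\sigma\cap\sigma'|=k$ and $\sigma\cup\sigma'\notin X$, in which case $M_{\sigma,\sigma'}=(\sigma\cup\sigma':\sigma)(\sigma\cup\sigma':\sigma')\in\{-1,1\}$. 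It follows that $\lambda_i(L_k(X))=n-\lambda_{f_k(X)+1-i}(M)$.

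\emph{Step 2 ($M$ sits inside an additive compound).} Write $L(\overline{G_X})=\sum_{\{a,b\}\notin E(G_X)}(e_a-e_b)(e_a-e_b)^T$ and use additivity of the additive compound to expand $L(\overline{G_X})^{[k+1]}=\sum_{\{a,b\}\notin E(G_X)}[(e_a-e_b)(e_a-e_b)^T]^{[k+1]}$. Evaluating each summand on a basis vector and reading off the entries indexed by $X(k)$: the diagonal entry at $\sigma$ is $\sum_{v\in\sigma}\deg_{\overline{G_X}}(v)$, and for $\sigma\neq\sigma'$ in $X(k)$ the entry is nonzero only if $|\sigma\cap\sigma'|=k$ and the pair $\sigma\triangle\sigma'$ is a non-edge of $G_X$. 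Since $X$ is a flag complex, for such $\sigma,\sigma'$ this last condition is equivalent to $\sigma\cup\sigma'\notin X$, and a sign computation—tracking the reordering $e_{v_0}\wedge\cdots\wedge e_b\wedge\cdots\wedge e_{v_k}=\pm e_{(\tau\setminus a)\cup b}$ against the incidence signs—shows that the entry then equals $(\sigma\cup\sigma':\sigma)(\sigma\cup\sigma':\sigma')$, i.e.\ exactly $M_{\sigma,\sigma'}$. Finally, a double count gives $\sum_{v\in\sigma}\deg_{\overline{G_X}}(v)=\sum_{w\notin\sigma}\big|\{v\in\sigma:\{v,w\}\notin E(G_X)\}\big|\ge (n-k-1)-\deg_X(\sigma)=M_{\sigma,\sigma}$, where the inequality uses that each $w$ with $\sigma\cup\{w\}\notin X$ contributes at least one. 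Hence the principal submatrix of $L(\overline{G_X})^{[k+1]}$ indexed by $X(k)$ equals $M+D$ for a nonnegative diagonal matrix $D$, so $M\preceq L(\overline{G_X})^{[k+1]}\big|_{X(k)}$.

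\emph{Step 3 (assembling the estimate).} Combining $M\preceq L(\overline{G_X})^{[k+1]}\big|_{X(k)}$ with Cauchy interlacing, applied to the $i$-th largest eigenvalue of a principal submatrix, yields $\lambda_{f_k(X)+1-i}(M)\le\lambda_{\binom{n}{k+1}+1-i}\big(L(\overline{G_X})^{[k+1]}\big)$. On the other hand $L_0(X)=L(G_X)+J=nI-L(\overline{G_X})$, hence $L_0(X)^{[k+1]}=n(k+1)I-L(\overline{G_X})^{[k+1]}$, and since the eigenvalues of a $(k+1)$-st additive compound are precisely the $(k+1)$-fold sums of eigenvalues of the original matrix, $S_{k+1,i}(L_0(X))=\lambda_i\big(L_0(X)^{[k+1]}\big)=n(k+1)-\lambda_{\binom{n}{k+1}+1-i}\big(L(\overline{G_X})^{[k+1]}\big)$. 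Substituting into the identity of Step 1,
\[
\lambda_i(L_k(X))=n-\lambda_{f_k(X)+1-i}(M)\ \ge\ n-\big(n(k+1)-S_{k+1,i}(L_0(X))\big)=S_{k+1,i}(L_0(X))-kn .
\]
The conceptual crux is the identification in Step 2—that the defect between the upper Laplacian of $X$ and that of the full simplex is controlled by the additive compound of $L(\overline{G_X})=nI-L_0(X)$, with flagness being exactly what forces the supports of the two matrices to agree. I expect the one genuinely fiddly point to be matching the $\pm1$ signs there; the rest is the two entrywise identities for $M$ and for $L(\overline{G_X})^{[k+1]}$ together with standard eigenvalue inequalities.
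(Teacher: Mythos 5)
Your proof is correct, and it takes a genuinely different (though clearly kindred) route to the statement. The paper obtains Theorem~\ref{thm::Lew} as a corollary of the more general Theorem~\ref{thm::main1}, whose proof writes $L_k(X)=Q-P$ with $Q$ an exact principal submatrix of $(L(G_X)+J)^{[k+1]}$ and then combines a Ger\v{s}gorin bound $\lambda_{\max}(P)\le kn + \max_\sigma\sum_j(j+1)|\sigma[j]|$ with Weyl's inequality and Cauchy interlacing. You instead write $L_k(X)=nI-M$ by comparing to the full simplex (using $L_k(\Delta)=nI$), observe that the defect $M$ is a Gram matrix built from the missing $(k+1)$-faces, and show that the principal submatrix of $L(\overline{G_X})^{[k+1]}$ on $X(k)$ equals $M$ plus a nonnegative diagonal $D$; the bound then follows from Cauchy interlacing and the identity $L(G_X)+J=nI-L(\overline{G_X})$, with no invocation of Weyl. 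The two decompositions are related by $Q=n(k+1)I-\bigl(L(\overline{G_X})^{[k+1]}\big|_{X(k)}\bigr)$ and $P=nkI-D$, so in the flag case they encode the same information; your phrasing as a clean PSD comparison $M\preceq L(\overline{G_X})^{[k+1]}\big|_{X(k)}$ is a nice structural gain. On the other hand, flagness is baked into your argument in two essential ways (support matching of the off-diagonal entries, and the diagonal inequality $M_{\sigma,\sigma}\le \sum_{v\in\sigma}\deg_{\overline{G_X}}(v)$), so it does not directly extend to the paper's Theorem~\ref{thm::main1}, where $P$ acquires off-diagonal entries from the $\sim_1$ pairs and the $\sigma[j]$ bookkeeping becomes necessary. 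Your sign verification $(\sigma\cup\sigma':\sigma)(\sigma\cup\sigma':\sigma')=-(\sigma:\sigma\cap\sigma')(\sigma':\sigma\cap\sigma')$ is the one delicate point, and it checks out.
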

	
	As a consequence, Lew proved that for all $k \geq 0$, the dimension of $\tilde{H}^k(X; \mathbb{R})$ is at most the number of $A \in \binom{[n]}{k+1}$ such that 
	\[
	\sum_{i \in A} \lambda_i(L(G_X) + J) \leq k n.
	\]

	Before proceeding, we introduce notation to quantify the discrepancy between a simplicial complex $X$ and the flag complex of its underlying graph $G_X$. For $k \geq 0$, $\sigma \in X(k)$, and $0 \leq j \leq k+1$, define
	\begin{equation}\label{equ::1}
		\sigma[j]:= \Big\{
		u \in V \Bigm| \text{$u \notin \operatorname{lk}_X(\sigma)$, $u \in \operatorname{lk}_X(v)$ for all $v \in \sigma$, and $\bigl|\{w\in \sigma: u \in \operatorname{lk}_X(\sigma \setminus \{w\})\}\bigr|=j$}\Big\}.
	\end{equation}
	Clearly, $\{\sigma[0],\ldots,\sigma[k+1]\}$ forms a partition of the set 
	\[
	\{u \in V \mid \text{$u \notin \operatorname{lk}_X(\sigma)$, $u \in \operatorname{lk}_X(v)$ for all $v \in \sigma$}\}.
	\]
	Note that $X$ is a flag complex if and only if $\sigma[j]=\emptyset$ for all $\sigma\in X(k)$, $0\leq j\leq k+1$, and $k\geq 0$.  Moreover, for $j \geq 2$, the condition ``$u \in \operatorname{lk}_X(v)$ for all $v \in \sigma$'' in \eqref{equ::1} is redundant. If this condition is removed and $k, j \geq 1$, then $\max_{\sigma \in X(k)} |\sigma[j]|$ coincides with the parameter $D_k(X,j)$ defined by Shukla and Yogeshwaran \cite{SY20}.

	%Before going further, we introduce notation that quantifies the discrepancy between a simplicial complex $X$ and the flag complex of its underlying graph $G_X$. For $k \geq 0$, $\sigma \in X(k)$, and $0 \leq j \leq k+1$, define
	%\begin{equation}\label{equ::1}
	%\sigma[j]:= \Big\{
	%u \in V \Bigm| \text{$u \notin \operatorname{lk}_X(\sigma)$, $u \in \operatorname{lk}_X(v)$ for all $v \in \sigma$, and $|\{w\in \sigma: u \in \operatorname{lk}_X(\sigma \setminus \{w\})\}|=j$}\Big\}.
	%\end{equation}
	%Clearly, $\{\sigma[0],\ldots,\sigma[k+1]\}$ is a partition of the set $\{u \in V \mid \text{$u \notin \operatorname{lk}_X(\sigma)$, $u \in \operatorname{lk}_X(v)$ for all $v \in \sigma$}\}$. In particular, we see that $X$ is a flag complex if and only if $\sigma[j]=\emptyset$ for all $\sigma\in X(k)$, $0\leq j\leq k+1$ and $k\geq 0$.  Moreover, for $j \geq 2$, the condition “$u \in \operatorname{lk}_X(v)$ for all $v \in \sigma$” in \eqref{equ::1} is redundant. If this condition is removed and $k, j \geq 1$, then $\max_{\sigma \in X(k)} |\sigma[j]|$ coincides with the parameter $D_k(X,j)$ defined by Shukla and Yogeshwaran \cite{SY20}.  
	
	Following the idea of Aharoni, Berger, and Meshulam \cite{ABM05} and using the parameters $D_k(X,j)$, Shukla and Yogeshwaran \cite{SY20} proved that for any simplicial complex $X$ on $n$ vertices,
	\[
	k\lambda_1(L_k(X)) \geq (k+1)\lambda_1(L_{k-1}(X)) - n - \sum_{j=2}^{k+1} \left( k(k+1) + j \right) D_k(X,j), \quad \text{for } k \geq 1.
	\]
	As a consequence, they established the following result:
	
	\begin{theorem}[{\cite[Corollary 1.7]{SY20}}]\label{cor::SY}
		Suppose the $k$-skeleton ($k \geq 1$) of $X$ coincides with that of the flag complex of $G_X$. If 
		$\lambda_2(L(G_X)) > \frac{k}{k+1}n + (k+1) D_k(X,k+1)$,
		then $\widetilde{H}^k(X;\mathbb{R}) = 0$.
	\end{theorem}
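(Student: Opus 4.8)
\emph{Proof plan.} The plan is to feed the Shukla--Yogeshwaran recursive inequality \cite{SY20} recorded above into the simplicial Hodge theorem (Theorem~\ref{thm::Hodge}). The role of the skeleton hypothesis is to kill every correction term $D_i(X,j)$ except for the single term $D_k(X,k+1)$, so that the recursion collapses to a shifted form of the Aharoni--Berger--Meshulam flag inequality \cite{ABM05} at levels $1,\dots,k-1$ and to a one-term perturbation of it at level $k$; iterating then produces a positive lower bound for $\lambda_1(L_k(X))$.

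\emph{Step 1: collapse of the discrepancy parameters.} I would first show that the hypothesis forces $D_i(X,j)=0$ for all $1\le i\le k-1$ and $2\le j\le i+1$, and also $D_k(X,j)=0$ for all $2\le j\le k$. Fix $\sigma\in X(i)$ and a vertex $u\notin\sigma$ witnessing one of these parameters, so that $u\notin\operatorname{lk}_X(\sigma)$ while $(\sigma\setminus\{w\})\cup\{u\}\in X$ for at least two distinct $w\in\sigma$. Each such set has dimension $i\le k$, hence belongs to the $k$-skeleton of $X$, which by hypothesis equals that of the flag complex of $G_X$; consequently $(\sigma\setminus\{w\})\cup\{u\}$ is a clique of $G_X$. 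Since two distinct choices of $w$ already exhaust $\sigma$, the vertex $u$ is $G_X$-adjacent to every vertex of $\sigma$, so $\sigma\cup\{u\}$ is a clique of $G_X$. If $i\le k-1$, then $\sigma\cup\{u\}$ has dimension $i+1\le k$, so it is a face of $X$, contradicting $u\notin\operatorname{lk}_X(\sigma)$. If $i=k$, then for \emph{every} $w\in\sigma$ the set $(\sigma\setminus\{w\})\cup\{u\}$ is a clique of dimension $k$, hence a face of $X$; thus the number of $w\in\sigma$ with $u\in\operatorname{lk}_X(\sigma\setminus\{w\})$ equals $|\sigma|=k+1$, which means $u$ can only lie in $\sigma[k+1]$. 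This proves the claim.

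\emph{Step 2: telescoping.} Using Step 1, the recursive inequality (with $k$ replaced by $i$) becomes $i\,\lambda_1(L_i(X))\ge(i+1)\lambda_1(L_{i-1}(X))-n$ for $1\le i\le k-1$, and at level $k$ it reads $k\,\lambda_1(L_k(X))\ge(k+1)\lambda_1(L_{k-1}(X))-n-(k+1)^2 D_k(X,k+1)$, since the surviving coefficient is $k(k+1)+(k+1)=(k+1)^2$. From $L_0(X)=L(G_X)+J$ one gets $\lambda_1(L_0(X))=\lambda_2(L(G_X))$, because the all-ones vector is an eigenvector of $L_0(X)$ for the eigenvalue $n\ge\lambda_2(L(G_X))$ while $L_0(X)$ acts as $L(G_X)$ on the orthogonal complement. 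A routine induction on $i$ then yields $\lambda_1(L_i(X))\ge(i+1)\lambda_2(L(G_X))-in$ for $0\le i\le k-1$, and substituting the case $i=k-1$ into the level-$k$ inequality gives, after division by $k$,
\[
\lambda_1(L_k(X))\ \ge\ (k+1)\lambda_2(L(G_X))-kn-\frac{(k+1)^2}{k}\,D_k(X,k+1).
\]

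\emph{Step 3: conclusion.} Because $k\ge1$ we have $\frac{k+1}{k}\le k+1$, so the hypothesis $\lambda_2(L(G_X))>\frac{k}{k+1}n+(k+1)D_k(X,k+1)$ makes the right-hand side of the last display strictly positive; hence $L_k(X)$ is positive definite, and Theorem~\ref{thm::Hodge} gives $\widetilde{H}^k(X;\mathbb{R})\cong\ker L_k(X)=0$. The one step I expect to require care is Step~1: the non-obvious point is that equal $k$-skeletons not only remove the discrepancies in dimensions below $k$ but also force $D_k(X,j)=0$ for $2\le j\le k$, leaving exactly one surviving correction term (with coefficient $(k+1)^2$). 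Granting that, Steps~2 and~3 are a mechanical telescoping of the inequality already displayed in the text.
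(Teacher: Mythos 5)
Your proof is correct and reconstructs exactly the derivation the paper implies; the paper itself does not reprove this statement but cites it verbatim from \cite{SY20}, preceding it only with the recursive inequality from which it follows. Your Step~1 correctly shows the skeleton hypothesis forces $D_i(X,j)=0$ at every lower level and $D_k(X,j)=0$ for $2\le j\le k$, leaving only $D_k(X,k+1)$ with surviving coefficient $k(k+1)+(k+1)=(k+1)^2$, and your telescoping from $\lambda_1(L_0(X))=\lambda_2(L(G_X))$ together with the observation $(k+1)^2/k\le(k+1)^2$ for $k\ge1$ gives $\lambda_1(L_k(X))>0$, after which the simplicial Hodge theorem finishes.
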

	
	It is worth noting that the results of Lew \cite{Lew20,Lew24} and Shukla and Yogeshwaran \cite{SY20} extend previous results of Aharoni, Berger, and Meshulam \cite{ABM05}. In fact, all these results can be viewed as global variants of theorems by Garland (cf. \cite[Theorem 5.9]{Gar73}) and Ballmann--\'{S}wi\k{a}tkowski (cf. \cite[Theorem 2.5]{BS97}) on the vanishing of cohomology of simplicial complexes. For further extensions and variants of this powerful method, we refer the reader to \cite{BS97,GW16,Kah14,Lew25}.

	In this paper, we first refine the notation $D_k(X,j)$ introduced by Shukla and Yogeshwaran \cite{SY20} to the more precise $\sigma[j]$, and then employ elementary matrix methods to extend the results of Theorem \ref{thm::Lew} and Theorem \ref{cor::SY} to general simplicial complexes. Our proof combines Lew's approach \cite{Lew24} with new insights regarding matrix decomposition and our notation $\sigma[j]$.

	%
	%In this paper, we first refine the notation $D_k(X,j)$ defined by Shukla and Yogeshwaran \cite{SY20} to that of $\sigma[j]$,  and then use elementary matrix methods to extend the results of Theorem \ref{thm::Lew} and Theorem \ref{cor::SY} to general simplicial complexes. Our proof combines the approach by Lew \cite{Lew24} with new ideals concerning matrix decomposition and our notation $\sigma[j]$.
	
	\begin{theorem}\label{thm::main1}
		Let $X$ be a simplicial complex on $n$ vertices. Then, for $k\geq 0$ and $1\leq i\leq f_k(X)$,
		\[
		\lambda_i({L_k(X)})\geq S_{k+1,i}(L(G_X)+J)-kn-\max_{\sigma\in X(k)}\sum_{j=0}^{k+1}(j+1)\cdot \bigl|\sigma[j]\bigr|.
		\] 
	\end{theorem}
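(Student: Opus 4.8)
The plan is to adapt Lew's additive-compound strategy from \cite{Lew24}, replacing his exact matrix identity (valid for flag complexes) by an entrywise matrix comparison that isolates the contribution of the sets $\sigma[j]$ from \eqref{equ::1}. Write $M:=L(G_X)+J=L_0(X)$, and let $M^{[k+1]}$ be the $(k+1)$-st additive compound matrix of $M$: this is a symmetric matrix indexed by $\binom{[n]}{k+1}$ whose spectrum is the multiset $\{\sum_{j\in A}\lambda_j(M):A\in\binom{[n]}{k+1}\}$, so that $\lambda_i(M^{[k+1]})=S_{k+1,i}(M)$. I would use the normalisation in which, for $(k+1)$-sets $\sigma\neq\tau$ with $|\sigma\cap\tau|=k$ and $\rho=\sigma\cap\tau$, the $(\sigma,\tau)$-entry equals $(\sigma:\rho)(\tau:\rho)M_{ab}$ with $\{a\}=\sigma\setminus\rho$, $\{b\}=\tau\setminus\rho$ (this is the orientation inherited from the boundary maps $\partial_\bullet(X)$, and the choice of normalisation does not affect the spectrum), and the $(\sigma,\sigma)$-entry equals $\sum_{v\in\sigma}M_{vv}$. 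Since every face in $X(k)$ is a $(k+1)$-subset of $[n]$, let $P$ be the principal submatrix of $M^{[k+1]}$ with rows and columns indexed by $X(k)$; by Cauchy's interlacing theorem, $\lambda_i(P)\ge\lambda_i(M^{[k+1]})=S_{k+1,i}(M)$ for all $1\le i\le f_k(X)$.

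First I would record the entries of $L_k(X)$. One has $(L_k(X))_{\sigma,\sigma}=\deg_X(\sigma)+(k+1)$; and for $\sigma\neq\tau$ in $X(k)$, the orientation identity $(\sigma\cup\tau:\sigma)(\sigma\cup\tau:\tau)=-(\sigma:\rho)(\tau:\rho)$ (when $|\sigma\cap\tau|=k$, $\rho=\sigma\cap\tau$) shows that $(L_k(X))_{\sigma,\tau}$ equals $(\sigma:\rho)(\tau:\rho)$ when $|\sigma\cap\tau|=k$ and $\sigma\cup\tau\notin X$, and vanishes otherwise. The core of the argument is the analysis of the symmetric matrix $R:=P-L_k(X)\in\mathbb{R}^{X(k)\times X(k)}$. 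Using $M_{vv}=\deg_{G_X}(v)+1$ and the fact that, for $a\neq b$, $M_{ab}=1$ if $\{a,b\}$ is a non-edge of $G_X$ and $M_{ab}=0$ otherwise, one computes $R_{\sigma,\sigma}=\sum_{v\in\sigma}\deg_{G_X}(v)-\deg_X(\sigma)$; and for $\sigma\neq\tau$ the entry $R_{\sigma,\tau}$ vanishes unless $|\sigma\cap\tau|=k$, $\sigma\cup\tau\notin X$, and $\{a,b\}$ is an edge of $G_X$, in which case $R_{\sigma,\tau}=-(\sigma:\rho)(\tau:\rho)$ and $|R_{\sigma,\tau}|=1$.

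Next I would rewrite these quantities in the language of \eqref{equ::1}. For the diagonal, I would partition $V\setminus\sigma$ according to $c(u):=|\{v\in\sigma:\{u,v\}\in E(G_X)\}|$: the vertices $u$ with $c(u)=k+1$ are precisely those lying in $\operatorname{lk}_X(\sigma)$ (there are $\deg_X(\sigma)$ of them) together with those in $\bigcup_{j=0}^{k+1}\sigma[j]$, while all remaining vertices have $c(u)\le k$; feeding this partition into $\sum_{v\in\sigma}\deg_{G_X}(v)=k(k+1)+\sum_{u\notin\sigma}c(u)$ and simplifying yields the diagonal bound $R_{\sigma,\sigma}\le kn+\sum_{j=0}^{k+1}|\sigma[j]|$. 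For the off-diagonal row sums, I would check that every $\tau\neq\sigma$ with $R_{\sigma,\tau}\neq 0$ has the form $\tau=(\sigma\setminus\{w\})\cup\{b\}$ with $w\in\sigma$, $b\in\bigcup_{j\ge1}\sigma[j]$, and $(\sigma\setminus\{w\})\cup\{b\}\in X$, and conversely each such pair $(b,w)$ gives such a $\tau$; since a vertex $b\in\sigma[j]$ admits exactly $j$ choices of $w$, this gives $\sum_{\tau\neq\sigma}|R_{\sigma,\tau}|=\sum_{j=0}^{k+1}j|\sigma[j]|$. Gershgorin's circle theorem applied to $R$ then gives $\lambda_{\max}(R)\le\max_{\sigma\in X(k)}\bigl(R_{\sigma,\sigma}+\sum_{\tau\neq\sigma}|R_{\sigma,\tau}|\bigr)\le kn+\max_{\sigma\in X(k)}\sum_{j=0}^{k+1}(j+1)|\sigma[j]|$. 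Hence $L_k(X)=P-R\succeq P-\bigl(kn+\max_{\sigma\in X(k)}\sum_{j=0}^{k+1}(j+1)|\sigma[j]|\bigr)I$, and the monotonicity of eigenvalues under the Loewner order, together with $\lambda_i(P)\ge S_{k+1,i}(M)$, completes the proof.

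The main obstacle I anticipate is the combinatorial bookkeeping in the third step: showing that the nonzero off-diagonal entries of $R$ correspond bijectively to the pairs $(b,w)$ described above — which amounts to verifying that such a $b$ necessarily belongs to some $\sigma[j]$ with $j\ge1$, and the converse — and making the diagonal estimate tight enough to produce exactly $kn+\sum_j|\sigma[j]|$ rather than a weaker bound. The linear-algebraic ingredients (Cauchy interlacing, Gershgorin, and eigenvalue monotonicity under the Loewner order) and the recollection of the additive-compound formalism are routine, once the sign normalisation matching $(\sigma:\rho)(\tau:\rho)$ is fixed exactly as in \cite{Lew24}.
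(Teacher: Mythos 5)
Your proof is correct and follows essentially the same route as the paper's. Up to notation (your $P$ is the paper's $Q$, your $R=P-L_k(X)$ is the paper's $P$), you use the identical decomposition $L_k(X)=Q-P$ where $Q$ is the principal submatrix of $(L(G_X)+J)^{[k+1]}$ indexed by $X(k)$; you bound $\lambda_{\max}$ of the correction via Gershgorin exactly as in Lemma~\ref{lem::key1} (including the partition of $V\setminus\sigma$ by the neighbor-count $c(u)$ and the identification of nonzero off-diagonal entries with pairs $(b,w)$, $b\in\sigma[j]$, $j\ge1$, matching \eqref{equ::2} and \eqref{equ::3}); and your Loewner-order monotonicity step is the same inequality the paper invokes as Weyl's inequality.
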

	
	Note that by the arguments following \eqref{equ::1}, Theorem \ref{thm::main1} implies the result of Theorem \ref{thm::Lew}. Moreover, by Theorem \ref{thm::main1} and Theorem \ref{thm::Hodge}, we immediately deduce the following upper bound on the dimension of the cohomology group $\widetilde{H}^k(X;\mathbb{R})$.

	\begin{corollary}\label{cor::1}
		Let $X$ be a simplicial complex on $n$ vertices. For any $k \geq 0$,
		\[
		\dim \left( \widetilde{H}^k(X;\mathbb{R}) \right) \leq 
		\left| \left\{ A \in \binom{[n]}{k+1} \Biggm| 
		\sum_{i \in A} \lambda_i(L(G_X) + J) \leq k n + 
		\max_{\sigma \in X(k)} \sum_{j=0}^{k+1} (j+1) \cdot |\sigma[j]| 
		\right\} \right|.
		\]
	\end{corollary}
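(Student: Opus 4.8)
The plan is to read the corollary off directly from Theorem~\ref{thm::main1} together with the simplicial Hodge theorem (Theorem~\ref{thm::Hodge}), so that the work is essentially bookkeeping about eigenvalue orderings. Abbreviate $M:=L(G_X)+J$, $C:=\max_{\sigma\in X(k)}\sum_{j=0}^{k+1}(j+1)\cdot|\sigma[j]|$, and $m:=\dim\bigl(\widetilde{H}^k(X;\mathbb{R})\bigr)$. First I would use Theorem~\ref{thm::Hodge} to identify $m$ with $\dim\ker L_k(X)$. Since $L_k(X)=\partial_{k+1}(X)\partial_{k+1}(X)^T+\partial_k(X)^T\partial_k(X)$ is real symmetric and positive semi-definite, the dimension of its kernel equals the multiplicity of $0$ as an eigenvalue, i.e.\ the number of indices $i$ with $\lambda_i(L_k(X))=0$; because the eigenvalues are listed in nondecreasing order, these are precisely the indices $i=1,\dots,m$, and in particular $m\leq f_k(X)$.

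Next I would invoke Theorem~\ref{thm::main1} at each such index. For every $i\in\{1,\dots,m\}$ we have $1\leq i\leq f_k(X)$, so the theorem yields
\[
0=\lambda_i(L_k(X))\ \geq\ S_{k+1,i}(M)-kn-C ,
\]
hence $S_{k+1,i}(M)\leq kn+C$ for all $i=1,\dots,m$.

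Finally I would translate this back through the definition of $S_{k+1,\bullet}$. By construction the sequence $S_{k+1,1}(M)\leq S_{k+1,2}(M)\leq\cdots$ is exactly the nondecreasing rearrangement of the multiset $\bigl\{\sum_{a\in A}\lambda_a(M)\mid A\in\binom{[n]}{k+1}\bigr\}$, which has $\binom{n}{k+1}\geq f_k(X)\geq m$ entries. Therefore the fact that its first $m$ terms are all $\leq kn+C$ means that at least $m$ of the subset-sums $\sum_{a\in A}\lambda_a(M)$ are $\leq kn+C$, i.e.
\[
m\ \leq\ \left|\left\{A\in\binom{[n]}{k+1}\ \Bigm|\ \sum_{a\in A}\lambda_a(M)\leq kn+C\right\}\right| ,
\]
which is exactly the asserted inequality once $M$ and $C$ are written out in full. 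There is essentially no genuine obstacle here; the only point requiring mild care is keeping the two index ranges consistent ($1,\dots,f_k(X)$ for the spectrum of $L_k(X)$ versus $1,\dots,\binom{n}{k+1}$ for the multiset defining $S_{k+1,\bullet}$), which is handled by the trivial bound $f_k(X)=|X(k)|\leq\binom{n}{k+1}$ so that Theorem~\ref{thm::main1} indeed applies at every index up to $m$.
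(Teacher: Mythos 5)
Your proof is correct and matches the paper's (implicit) argument: the paper simply states that the corollary follows immediately from Theorem~\ref{thm::main1} together with the simplicial Hodge theorem, and your write-up is exactly that deduction made explicit, using positive semi-definiteness of $L_k(X)$ so that $\dim\ker L_k(X)$ equals the number of leading zero eigenvalues, applying the eigenvalue lower bound at those indices, and then translating $S_{k+1,i}\le kn+C$ for $i\le m$ into a count of qualifying subsets $A$.
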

	
	Let $k \geq 1$. If the $k$-skeleton of $X$ coincides with that of the flag complex of $G_X$, 
	it is easy to verify that $\sigma[j] = \emptyset$ for all $\sigma \in X(k)$ and $0 \leq j \leq k$. 
	Observe that $(k+1)\lambda_2(L(G_X)) \leq \sum_{i \in A} \lambda_i(L(G_X) + J)$ for all $A \in \binom{[n]}{k+1}$. By Corollary \ref{cor::1}, we conclude that if
	$\lambda_2(L(G_X)) > \frac{kn}{k+1} + \frac{k+2}{k+1} \max_{\sigma \in X(k)} |\sigma[k+1]| 
	= \frac{kn}{k+1} + \frac{k+2}{k+1} D_k(X,k+1)$ then $\widetilde{H}^k(X;\mathbb{R}) = 0$, which strengthens the result of Theorem \ref{cor::SY}.
	
	In \cite{SY20}, Shukla and Yogeshwaran also derived another eigenvalue bound using variational methods for the smallest eigenvalue.
	
	\begin{theorem}[{\cite[Theorem 1.4]{SY20}}]\label{thm::SY1}
		For every simplicial complex $X$ and every subcomplex $X' \subset X$, and for $k \geq 1$, 
		\[
		\lambda_1(L_k(X')) \geq \lambda_1(L_k(X)) - (k+2) \max_{\sigma \in X'(k)} \left( \deg_{X}(\sigma) - \deg_{X'}(\sigma) \right).
		\]
	\end{theorem}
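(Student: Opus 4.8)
The plan is to realize $L_k(X')$ as a positive-semidefinite perturbation of a principal submatrix of $L_k(X)$, and then run a Rayleigh-quotient (interlacing) argument. Index all matrices by faces. Since $X'$ is a subcomplex, $X'(j)\subseteq X(j)$ for every $j$, and, crucially, every $(k-1)$-face contained in a $k$-face of $X'$ already lies in $X'$; hence $\partial_k(X')$ is \emph{exactly} the submatrix of $\partial_k(X)$ obtained by keeping the rows indexed by $X'(k-1)$ and the columns indexed by $X'(k)$, and therefore $\partial_k(X')^{T}\partial_k(X')$ equals the principal submatrix of $\partial_k(X)^{T}\partial_k(X)$ on $X'(k)$. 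The upper boundary behaves differently: write $Q$ for the submatrix of $\partial_{k+1}(X)$ with rows restricted to $X'(k)$ but \emph{all} columns $X(k+1)$ retained, and split its columns according to $X(k+1)=X'(k+1)\,\sqcup\,\bigl(X(k+1)\setminus X'(k+1)\bigr)$ as $Q=[\,Q_1\mid Q_2\,]$. Then $Q_1=\partial_{k+1}(X')$, so $QQ^{T}=\partial_{k+1}(X')\partial_{k+1}(X')^{T}+Q_2Q_2^{T}$; since $QQ^{T}$ is the principal submatrix of $\partial_{k+1}(X)\partial_{k+1}(X)^{T}$ on $X'(k)$, summing the two pieces gives the identity
\[
L_k(X)\big|_{X'(k)}\;=\;L_k(X')\;+\;Q_2Q_2^{T},
\]
where $L_k(X)\big|_{X'(k)}$ is the principal submatrix of $L_k(X)$ on the index set $X'(k)$ and $Q_2Q_2^{T}\succeq 0$.

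With this identity in hand I would argue variationally. For a unit vector $x\in\mathbb{R}^{X'(k)}$, let $\tilde x\in\mathbb{R}^{X(k)}$ be its extension by zero; then $x^{T}\bigl(L_k(X)\big|_{X'(k)}\bigr)x=\tilde x^{T}L_k(X)\tilde x\ge\lambda_1(L_k(X))$, so
\[
x^{T}L_k(X')x \;=\; \tilde x^{T}L_k(X)\tilde x-\lVert Q_2^{T}x\rVert^{2}\;\ge\;\lambda_1\bigl(L_k(X)\bigr)-\lambda_{\max}\bigl(Q_2Q_2^{T}\bigr).
\]
Minimizing the left-hand side over unit vectors yields $\lambda_1(L_k(X'))\ge\lambda_1(L_k(X))-\lambda_{\max}(Q_2Q_2^{T})$, so everything reduces to bounding $\lambda_{\max}(Q_2Q_2^{T})=\lVert Q_2\rVert_2^{2}$. (Replacing the two ingredients by Weyl's perturbation inequality for the $-Q_2Q_2^{T}$ term and Cauchy's interlacing theorem for the principal submatrix would carry the same argument through for every eigenvalue $\lambda_i$, which is presumably the generalization announced in the introduction.)

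To bound the norm, observe that $Q_2$ has entries in $\{0,\pm1\}$, with $(Q_2)_{\sigma,\tau}\ne 0$ (for $\sigma\in X'(k)$, $\tau\in X(k+1)\setminus X'(k+1)$) if and only if $\sigma\subset\tau$. Each column of $Q_2$ has at most $k+2$ nonzero entries, since a $(k+1)$-face has exactly $k+2$ faces of dimension $k$; and the row of $Q_2$ indexed by $\sigma$ has exactly $\deg_X(\sigma)-\deg_{X'}(\sigma)$ nonzero entries, this being the number of $(k+1)$-faces of $X$ outside $X'$ containing $\sigma$. Hence, with $m:=\max_{\sigma\in X'(k)}\bigl(\deg_X(\sigma)-\deg_{X'}(\sigma)\bigr)$, Cauchy--Schwarz gives, for any unit vector $x$,
\[
\lVert Q_2^{T}x\rVert^{2}=\sum_{\tau}\Bigl(\sum_{\sigma\subset\tau}(\tau:\sigma)\,x_\sigma\Bigr)^{2}\le(k+2)\sum_{\tau}\sum_{\sigma\subset\tau}x_\sigma^{2}=(k+2)\sum_{\sigma}\bigl(\deg_X(\sigma)-\deg_{X'}(\sigma)\bigr)x_\sigma^{2}\le(k+2)m,
\]
so $\lambda_{\max}(Q_2Q_2^{T})\le(k+2)m$, which is exactly the claimed inequality.

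I expect the only real obstacle to be the bookkeeping behind the displayed identity $L_k(X)\big|_{X'(k)}=L_k(X')+Q_2Q_2^{T}$: one must verify carefully that the \emph{lower} Laplacian $\partial_k^{T}\partial_k$ restricts to $X'(k)$ precisely (which uses that $X'$ is downward closed) while the \emph{upper} Laplacian $\partial_{k+1}\partial_{k+1}^{T}$ over-counts by exactly the positive-semidefinite contribution $Q_2Q_2^{T}$ of the $(k+1)$-faces lying in $X$ but not in $X'$. After that, the interlacing step and the Cauchy--Schwarz estimate --- with the constant $k+2$ being simply the number of facets of a $(k+1)$-simplex --- are routine.
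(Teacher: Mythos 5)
Your argument is correct and reaches the same bound, but it differs from the paper's proof in one genuinely useful way: where the paper computes the entries of $L' - L_k(X')$ directly from the explicit Laplacian formula in Lemma~\ref{lem::Matrix_Rep} and then bounds its largest eigenvalue via the Ger\v{s}gorin circle theorem, you derive the identity $L'-L_k(X')=Q_2Q_2^{T}$ from the boundary-map structure (using that the lower Laplacian restricts exactly because $X'$ is downward-closed, while the upper Laplacian overcounts by the Gram matrix of the missing top faces). This identifies the perturbation as positive semidefinite --- a structural fact the paper's computation produces but never names --- and then a row-column Cauchy--Schwarz estimate gives $\lVert Q_2\rVert_2^2\le(k+2)m$, the same numerical bound Ger\v{s}gorin yields. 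Both proofs then finish identically by Cauchy interlacing for the principal submatrix plus a Weyl-type perturbation step; you state this for $\lambda_1$ and correctly note that substituting Weyl's inequality extends it verbatim to all $\lambda_i$, which is exactly what the paper proves as Theorem~\ref{thm::main2}. One small point to be careful of in your Cauchy--Schwarz display: the inner sum ranges over $\sigma\in X'(k)$ with $\sigma\subset\tau$, so it has \emph{at most} $k+2$ terms rather than exactly $k+2$; the inequality is still valid, but the equality sign you wrote before the final $\le$ should also be an inequality.
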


	Our matrix method can be used to generalize Theorem \ref{thm::SY1} to arbitrary eigenvalues as well.
	
	\begin{theorem}\label{thm::main2}
		Let $X$ be a simplicial complex, and let $X'$ be a subcomplex of $X$. Then, for $k\geq 0$ and $1\leq i\leq f_k(X')$,
		\[\lambda_{i}(L_k(X'))\geq \lambda_{i}(L_k(X))-(k+2)\max_{\sigma\in X'(k)}(\deg_{X}(\sigma)-\deg_{X'}(\sigma)).\]
	\end{theorem}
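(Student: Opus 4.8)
The plan is to realize $L_k(X')$ as a principal submatrix of $L_k(X)$ minus an explicit positive semidefinite ``correction'' matrix, and then combine Cauchy interlacing with Weyl's inequality. First I would introduce the coordinate restriction: let $P\in\mathbb{R}^{X'(k)\times X(k)}$ be the $0$-$1$ matrix with $P_{\sigma,\sigma'}=1$ iff $\sigma=\sigma'$, so that $PMP^T$ is the principal submatrix of $M$ indexed by $X'(k)$. The key claim is
\[
P L_k(X) P^T = L_k(X') + B B^T,
\]
where $B\in\mathbb{R}^{X'(k)\times(X(k+1)\setminus X'(k+1))}$ has entries $B_{\sigma,\tau}=(\tau:\sigma)$ if $\sigma\subset\tau$ and $0$ otherwise. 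To prove this, write $L_k(X)=\partial_{k+1}(X)\partial_{k+1}(X)^T+\partial_k(X)^T\partial_k(X)$. Since $X'$ is a subcomplex, every $(k-1)$-face contained in a member of $X'(k)$ already lies in $X'$; hence the columns of $\partial_k(X)$ indexed by $X'(k)$ are supported on rows in $X'(k-1)$ and agree there with $\partial_k(X')$, giving $P\,\partial_k(X)^T\partial_k(X)\,P^T=\partial_k(X')^T\partial_k(X')$. For the up-Laplacian, split the columns of $P\partial_{k+1}(X)$ according to whether the corresponding $(k+1)$-face lies in $X'$: the block over $X'(k+1)$ equals $\partial_{k+1}(X')$ (again by the subcomplex property), and the block over $X(k+1)\setminus X'(k+1)$ is exactly $B$, so multiplying out the block form yields $P\partial_{k+1}(X)\partial_{k+1}(X)^TP^T=\partial_{k+1}(X')\partial_{k+1}(X')^T+BB^T$. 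Summing the two identities proves the claim.

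With the decomposition in hand, the estimate follows from two standard facts. By Cauchy interlacing applied to the principal submatrix $PL_k(X)P^T$ of $L_k(X)$, we get $\lambda_i(PL_k(X)P^T)\geq\lambda_i(L_k(X))$ for $1\leq i\leq f_k(X')$. Since $BB^T$ is positive semidefinite, Weyl's inequality gives
\[
\lambda_i(L_k(X')) = \lambda_i\bigl(PL_k(X)P^T - BB^T\bigr) \geq \lambda_i(PL_k(X)P^T) - \lambda_{\max}(BB^T) \geq \lambda_i(L_k(X)) - \lambda_{\max}(BB^T).
\]
It then remains to bound $\lambda_{\max}(BB^T)=\|B\|_2^2$ via $\|B\|_2^2\leq\|B\|_1\|B\|_\infty$ (maximum absolute column sum times maximum absolute row sum). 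Each column of $B$ corresponds to a $(k+1)$-face $\tau$ and has at most $k+2$ nonzero entries, one for each $k$-subface of $\tau$, so $\|B\|_1\leq k+2$; each row corresponds to $\sigma\in X'(k)$ and has exactly $\bigl|\{\tau\in X(k+1)\setminus X'(k+1):\sigma\subset\tau\}\bigr|=\deg_X(\sigma)-\deg_{X'}(\sigma)$ nonzero entries, so $\|B\|_\infty=\max_{\sigma\in X'(k)}(\deg_X(\sigma)-\deg_{X'}(\sigma))$. Combining these gives $\lambda_{\max}(BB^T)\leq(k+2)\max_{\sigma\in X'(k)}(\deg_X(\sigma)-\deg_{X'}(\sigma))$, which is the claimed bound.

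The only genuine content beyond standard linear algebra is the block decomposition, and within it the one place the hypotheses are actually used: that $X'$ being a subcomplex forces $B$ to capture precisely the $(k+1)$-cofaces of $\sigma$ living outside $X'$, whence its row sums are exactly the degree defects $\deg_X(\sigma)-\deg_{X'}(\sigma)$ — this is where the main term $(k+2)\max_{\sigma\in X'(k)}(\deg_X(\sigma)-\deg_{X'}(\sigma))$ originates. I expect the chief things to get right to be the direction of the interlacing inequality ($PL_k(X)P^T$ dominates $L_k(X)$ eigenvalue-wise) and of Weyl's inequality after subtracting the positive semidefinite term; the rest is routine block-matrix bookkeeping.
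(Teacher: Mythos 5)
Your proof is correct, and it takes a genuinely different route from the paper's. The paper takes the principal submatrix $L'$ of $L_k(X)$ indexed by $X'(k)$, computes the entries of $L'-L_k(X')$ directly from the explicit Duval--Reiner formula for the Laplacian (their Lemma~\ref{lem::Matrix_Rep}), and bounds its largest eigenvalue by the Ger\v{s}gorin circle theorem (each row has diagonal entry $\deg_X(\sigma)-\deg_{X'}(\sigma)$ and exactly $(k+1)(\deg_X(\sigma)-\deg_{X'}(\sigma))$ off-diagonal entries equal to $\pm1$), then finishes with Weyl and Cauchy interlacing exactly as you do. You instead identify the difference structurally as $BB^T$, where $B$ is the block of $\partial_{k+1}(X)$ with rows in $X'(k)$ and columns in $X(k+1)\setminus X'(k+1)$, and bound $\lambda_{\max}(BB^T)=\|B\|_2^2\leq\|B\|_1\|B\|_\infty$. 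The two computations land on the same constant $(k+2)\max_{\sigma}(\deg_X(\sigma)-\deg_{X'}(\sigma))$ — indeed Ger\v{s}gorin on $BB^T$ gives $\|BB^T\|_\infty\leq\|B\|_\infty\|B\|_1$, so the estimates are essentially equivalent here. What your approach buys is conceptual clarity: the $BB^T$ factorization makes it manifest that $L'-L_k(X')$ is positive semidefinite, so one also gets for free the monotonicity $\lambda_i(L_k(X'))\leq\lambda_i(L')$, which the paper's Ger\v{s}gorin route does not exhibit. What the paper's route buys is that it requires no separate verification that the boundary-map blocks are consistent between $X$ and $X'$ — it reads the difference straight off the known entry formula.
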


	Let $p := p(n) \in (0,1)$ be a monotone function of $n$. The Erd\H{o}s--R\'{e}nyi random graph $G(n,p)$ is defined as a graph on $n$ vertices where each distinct pair of vertices is connected by an edge independently with probability $p$. We say that a graph property $\mathcal{P}$ holds \textit{almost always} (\textit{a.a.}) for $G(n,p)$ if $\mathbb{P}[G(n,p) \in \mathcal{P}] \rightarrow 1$ as $n \rightarrow \infty$.
	
	Let $G = (V,E)$ be a finite graph. The \textit{neighborhood complex} of $G$, denoted by $\mathcal{N}[G]$, is the simplicial complex whose faces are the subsets $\sigma \subseteq V$ that have a common neighbor. Neighborhood complexes were introduced by Lov\'{a}sz \cite{Lov78} in his proof of the famous Kneser conjecture.
	
	A topological space $\mathcal{X}$ is said to be $k$-\textit{connected} if every map from a sphere $\mathbb{S}^i \rightarrow \mathcal{X}$ extends to a map from the ball $\mathbb{B}^{i+1} \rightarrow \mathcal{X}$ for $i = 0,1,\ldots,k$. A simplicial complex $X$ is called $k$-connected if its geometric realization is $k$-connected. 
	
	In 2007, Kahle \cite{Kah07} studied the neighborhood complex of the Erd\H{o}s--R\'{e}nyi random graph and proved the following result.
	
	\begin{theorem}[{\cite[Theorem 2.1]{Kah07}}]\label{thm::Kahle}
		Let $k \geq 0$. If $\binom{n}{k+2}(1-p^{k+2})^{n-k-2} = \operatorname{o}(1)$, then $\mathcal{N}[G(n,p)]$ is \textit{a.a.} $k$-connected.
	\end{theorem}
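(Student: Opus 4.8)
The plan is to split the argument into a topological reduction and a short first-moment estimate. The topological core is the claim: \emph{if $G$ is a graph on a vertex set $V$ in which every $(k+2)$-element subset of $V$ has a common neighbor, then $\mathcal{N}[G]$ is $k$-connected.} I would prove this by a skeleton argument. Since a common neighbor of a set is also a common neighbor of each of its subsets, the hypothesis forces every subset of $V$ of size at most $k+2$ to be a face of $\mathcal{N}[G]$; in particular every vertex has a neighbor, so $\mathcal{N}[G]$ has vertex set all of $V$ and its $(k+1)$-skeleton equals the full $(k+1)$-skeleton of the $(|V|-1)$-dimensional simplex on $V$. Now $\mathcal{N}[G]$ is $k$-connected because: (i) the $m$-skeleton of a simplex is $(m-1)$-connected, so the $(k+1)$-skeleton of $\mathcal{N}[G]$ is $k$-connected; and (ii) $k$-connectivity of a simplicial complex is detected by its $(k+1)$-skeleton. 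For (ii): given $f\colon \mathbb{S}^i\to\mathcal{N}[G]$ with $0\le i\le k$, simplicial approximation replaces $f$ by a homotopic map into the $i$-skeleton, which lies in the $(k+1)$-skeleton; by (i) this map extends over $\mathbb{B}^{i+1}$ within the $(k+1)$-skeleton, and gluing this extension to the approximation homotopy yields an extension of the original $f$ over $\mathbb{B}^{i+1}$.

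Granting the claim, I would bound the probability that its hypothesis fails for $G(n,p)$. Fix a subset $S$ of $k+2$ vertices. A vertex $w$ is a common neighbor of $S$ only if $w\notin S$, and for the $n-k-2$ vertices $w\notin S$ the events ``$w$ is adjacent to every vertex of $S$'' are mutually independent, each of probability $p^{k+2}$. Hence $S$ has no common neighbor with probability exactly $(1-p^{k+2})^{n-k-2}$, and a union bound over the $\binom{n}{k+2}$ choices of $S$ gives
\[
\mathbb{P}\bigl[\text{some }(k+2)\text{-subset has no common neighbor}\bigr]\ \le\ \binom{n}{k+2}(1-p^{k+2})^{n-k-2}\ =\ \operatorname{o}(1)
\]
by hypothesis. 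Therefore \textit{a.a.} every $(k+2)$-subset of $V(G(n,p))$ has a common neighbor, and the claim then gives that $\mathcal{N}[G(n,p)]$ is \textit{a.a.} $k$-connected.

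I expect the probabilistic estimate and the identification of the $(k+1)$-skeleton to be entirely routine. The step requiring genuine care — and which I would back up with explicit references — is the topological reduction (i)--(ii): that $k$-connectivity of a simplicial complex depends only on its $(k+1)$-skeleton (via simplicial approximation, or via the fact that $(X,X^{(k+1)})$ is a $(k+1)$-connected pair), and that the $m$-skeleton of a simplex is $(m-1)$-connected (it is homotopy equivalent to a wedge of $m$-spheres). Neither fact is deep, but both should be stated precisely rather than taken for granted, since the whole reduction rests on them.
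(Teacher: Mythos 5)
Your proof is correct, and it is essentially the argument of Kahle's original paper (which this paper cites without reproving): a first-moment/union-bound estimate showing that a.a. every $(k+2)$-subset of vertices has a common neighbor, combined with the standard topological fact that a simplicial complex containing the full $(k+1)$-skeleton of a simplex is $k$-connected (since the $(k+1)$-skeleton of $\Delta^{n-1}$ is a wedge of $(k+1)$-spheres and $k$-connectivity is governed by the $(k+1)$-skeleton). Both halves of your argument are sound as written.
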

	
	By Theorem \ref{thm::Kahle} and the Hurewicz theorem (cf. \cite[Theorem 4.32]{Hat02}), if $p = (\frac{(k+2)\log n + c_n}{n})^{\frac{1}{k+2}}$ with $c_n = \operatorname{o}(\log n) \rightarrow \infty$, then \textit{a.a.} $\widetilde{H}^{i}(\mathcal{N}[G(n,p)];\mathbb{R})=0$ for all $i \leq k$. Using Theorem \ref{cor::SY}, Shukla and Yogeshwaran \cite{SY20} improved this result by showing that if $p = (\frac{(k+1)\log n + c_n}{n})^{\frac{1}{k+2}}$ with $c_n = \operatorname{o}(\log n) \rightarrow \infty$, then \textit{a.a.} $\widetilde{H}^{i}(\mathcal{N}[G(n,p)];\mathbb{R})=0$ for all $i \leq k$. Note that here $p$ satisfies the condition $\binom{n}{k+2}(1 - p^{k+2})^{n-k-2} = \operatorname{o}(n)$. For more related works on random simplicial complexes, we refer the reader to \cite{GW16, Kah14, Kah14+}.
	
	In this paper, using Theorem \ref{thm::Kahle} and Corollary \ref{cor::1}, we derive the following result, which refines the works of Kahle \cite{Kah07} and Shukla and Yogeshwaran \cite{SY20}.
	
	\begin{theorem}\label{thm::main3}
		Let $k \geq s \geq 1$. If $\binom{n}{k+2}(1 - p^{k+2})^{n-k-2} = \operatorname{o}(n^s)$, then $\mathcal{N}[G(n,p)]$ is \textit{a.a.} $(k-s)$-connected, and \textit{a.a.} $\widetilde{H}^{k-s+1}(\mathcal{N}[G(n,p)])=0$.
	\end{theorem}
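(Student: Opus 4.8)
The plan is to derive the connectivity statement from Theorem~\ref{thm::Kahle} and the cohomology-vanishing statement from Corollary~\ref{cor::1}, using the connectivity to pass from real to integer coefficients. The hypothesis $\binom{n}{k+2}(1-p^{k+2})^{n-k-2}=\operatorname{o}(n^s)$ is weaker than the one needed to apply Theorem~\ref{thm::Kahle} at level $k$, but $\operatorname{o}(n^s)$ is exactly the slack that an $s$-fold drop in the dimension parameter should absorb, so the first task is to make this precise.

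First I would record a monotonicity estimate. Put $f(j):=\binom{n}{j+2}(1-p^{j+2})^{n-j-2}$. For $n$ large, $\binom{n}{j+2}$ is increasing in $j$, and $(1-p^{j+2})^{n-j-2}$ is increasing in $j$ as well (its base increases and its exponent decreases), so $f(0)\le f(1)\le\cdots\le f(k)$, and comparing successive ratios gives $f(k)/f(k-s)\ge\prod_{j=k-s}^{k-1}\frac{n-j-2}{j+3}\ge (c\,n)^s$ for some constant $c=c(k,s)>0$. Hence the hypothesis yields $f(k-s)\le f(k)/(c\,n)^s=\operatorname{o}(1)$; since $k-s\ge 0$, Theorem~\ref{thm::Kahle} then shows $\mathcal N[G(n,p)]$ is a.a.\ $(k-s)$-connected, so by the Hurewicz theorem a.a.\ $\widetilde H_i(\mathcal N[G(n,p)];\mathbb Z)=0$ for all $i\le k-s$. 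Moreover $f(0)\le f(k-s)=\operatorname{o}(1)$, so a.a.\ every pair of vertices of $G(n,p)$ has a common neighbour; together with the fact (also forced by the hypothesis) that $np\to\infty$, this makes $X:=\mathcal N[G(n,p)]$ a.a.\ a complex on $[n]$ with $G_X=K_n$, whence $L(G_X)+J=nI$ and $\sum_{i\in A}\lambda_i(L(G_X)+J)=(m+1)n$ for every $A\in\binom{[n]}{m+1}$, where $m:=k-s+1$.

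Next, since a.a.\ $\widetilde H_{m-1}(X;\mathbb Z)=\widetilde H_{k-s}(X;\mathbb Z)=0$, the universal coefficient theorem makes $\widetilde H^{m}(X;\mathbb Z)\cong\operatorname{Hom}(\widetilde H_{m}(X;\mathbb Z),\mathbb Z)$ free abelian of rank $\dim_{\mathbb R}\widetilde H^{m}(X;\mathbb R)$; hence it suffices to prove $\widetilde H^{m}(X;\mathbb R)=0$ a.a. With $G_X=K_n$, Corollary~\ref{cor::1} gives this as soon as $C_m:=\max_{\sigma\in X(m)}\sum_{j=0}^{m+1}(j+1)\,|\sigma[j]|<n$. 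Each $\sigma[j]$ is contained in $\{u\notin\sigma:\sigma\cup\{u\}\notin X\}$, so $C_m\le (m+2)\max_{\sigma\in X(m)} B(\sigma)$ with $B(\sigma):=|\{u:\sigma\cup\{u\}\notin X\}|$, and it remains to show that a.a.\ every $m$-face $\sigma$ has $B(\sigma)<n/(m+2)$.

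The hard part will be this last estimate, because a naive union bound over the candidate sets of $\Theta(n)$ bad extensions of a face is too weak once $p\to 0$. Since $p$ is monotone, either $p$ is bounded away from $0$ --- in which case a fixed $(m+2)$-set has no common neighbour with probability $(1-p^{m+2})^{n-m-2}=e^{-\Omega(n)}$, so a union bound already shows a.a.\ every $(m+2)$-set is a face and $B(\sigma)\equiv 0$ --- or $p\to 0$, in which case the hypothesis gives $np^{k+2}=\Omega(\log n)$, hence $np^{m+1}=np^{k+2}p^{-s}\gg\log n$ and $np^{m+2}\to\infty$. In the latter case I would condition on the edges between $\sigma$ and its complement: the common-neighbour set $N_\sigma$ then satisfies $|N_\sigma|\sim\operatorname{Bin}(n-m-1,p^{m+1})$, so Chernoff bounds and a union bound over the $O(n^{m+1})$ faces give that a.a.\ $\tfrac12 np^{m+1}\le|N_\sigma|\le 2np^{m+1}=\operatorname{o}(n)$ for all $(m+1)$-sets $\sigma$. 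Conditionally on $N_\sigma$, the bad extensions $u\notin N_\sigma$ are exactly the vertices with no neighbour in $N_\sigma$, which by independence of the relevant edges form a binomial variable of mean $\le n(1-p)^{|N_\sigma|}\le n e^{-\frac12 np^{m+2}}=\operatorname{o}(n)$; and the bad extensions $u\in N_\sigma$ are the isolated vertices of $G[N_\sigma]$, of which there are at most $|N_\sigma|=\operatorname{o}(n)$. A final Chernoff bound on the binomial term, again union-bounded over the faces, then forces $B(\sigma)<n/(m+2)$ for every $m$-face $\sigma$ a.a., which gives $C_m<n$ and completes the proof. The two delicate points I anticipate are extracting $np^{m+1}\gg\log n$ (with a good enough constant) from the $\operatorname{o}(n^s)$ hypothesis, and arranging the two-sided control of $|N_\sigma|$ uniformly over all $O(n^{m+1})$ faces.
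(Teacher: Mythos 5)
Your overall architecture matches the paper's: use Theorem~\ref{thm::Kahle} and Hurewicz for the connectivity and the integral homology vanishing in low degrees, observe that the hypothesis forces $G_X=K_n$ a.a.\ so that $L(G_X)+J=nI$, feed this into Corollary~\ref{cor::1} to control $\widetilde H^{k-s+1}(X;\mathbb R)$, and upgrade to integer coefficients via the universal coefficient theorem. Your monotonicity estimate $f(k)/f(k-s)\geq (cn)^s$, proved by comparing successive ratios $f(j+1)/f(j)\geq\frac{n-j-2}{j+3}$, is in fact a cleaner and more direct route to the same conclusion that the paper obtains from Lemma~\ref{lem::Order} (which the paper proves by a double-counting argument on $|\overline{X(k)}|$ versus $|\overline{X(k+1)}|$); you also avoid the reduction to the $s=1$ case by working directly at level $m=k-s+1$. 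That part is fine and arguably better.

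The divergence — and the place where you make life much harder than it needs to be — is the final step, bounding $C_m=\max_{\sigma\in X(m)}\sum_{j=0}^{m+1}(j+1)|\sigma[j]|$ below $n$. You pass to the per-face quantity $B(\sigma)=|\{u:\sigma\cup\{u\}\notin X\}|$ and then try to show, by conditioning on the edges out of $\sigma$, two-sided Chernoff control of the common-neighbourhood $|N_\sigma|$, and a union bound over all $O(n^{m+1})$ faces, that \emph{every} $\sigma$ has $B(\sigma)<n/(m+2)$. This is a much stronger statement than what Corollary~\ref{cor::1} requires, it is technically delicate (as you yourself flag: extracting $np^{m+1}\gg\log n$ with the right constant, and the two-sided uniform control), and none of it is needed. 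The paper instead notes the trivial global bound $\max_{\sigma\in X(m)}\sum_{j=0}^{m+1}|\sigma[j]|\leq|\overline{X(m+1)}|$, because the sets $\sigma\cup\{u\}$ with $u\in\bigcup_j\sigma[j]$ are distinct non-faces of dimension $m+1$. Hence $C_m\leq(m+2)|\overline{X(m+1)}|$, and
\[
\mathbb E\bigl[|\overline{X(m+1)}|\bigr]=\binom{n}{m+2}(1-p^{m+2})^{n-m-2}=f(m)=o(n),
\]
which is \emph{exactly} the quantity your monotonicity estimate already controls. A single application of Markov's inequality then gives $|\overline{X(m+1)}|<\frac{n}{m+2}$ a.a., hence $C_m<n$ a.a., and the proof closes. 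The second-moment/Chernoff machinery in your last paragraph should be replaced by this one-line Markov bound; as written it is the weakest and most incomplete part of an otherwise correct plan.
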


	The paper is organized as follows. In Section \ref{section::2}, we express $L_k(X)$ as the difference of two matrices and provide a concise proof of Theorem \ref{thm::main1} using elementary matrix techniques. Theorem \ref{thm::main2} is also established via similar matrix methods. Section \ref{section::3} is devoted to the proof of Theorem \ref{thm::main3}, which relies on Theorem \ref{thm::Kahle} and Corollary \ref{cor::1}. Finally, in Section \ref{section::4}, we propose two conjectures related to Theorem \ref{thm::main3} for further research.
	
	\section{Eigenvalue bounds for combinatorial Laplacians}\label{section::2}
	% The following theorem establishes a relationship between the cohomology groups and the Laplacian operators of $X$.
	%
	%\begin{theorem}[Simplicial Hodge theorem, \cite{Eck44}]\label{thm::Hodge}
	%	For a simplicial complex $X$, we have
	%	\[\widetilde{H}^k(X;\mathbb{R})\cong \ker L_k(X).\]
	%\end{theorem}	
	In this section, we prove Theorems \ref{thm::main1} and \ref{thm::main2} using elementary matrix methods.
	
	Let $X$ be a simplicial complex on vertex set $V$ with $|V|=n$ and $k \geq 0$. For any $\sigma, \tau \in X(k)$, we write $\sigma \sim \tau$ if  $|\sigma \cap \tau| = k$ and $\sigma \cup \tau \notin X(k+1)$. Moreover, when $\sigma \sim \tau$, we define:
	\begin{itemize}
		\item $\sigma \sim_1 \tau$ if $(\sigma \setminus \tau) \cup (\tau \setminus \sigma) \in X$;
		\item $\sigma \sim_2 \tau$ if $(\sigma \setminus \tau) \cup (\tau \setminus \sigma) \notin X$.
	\end{itemize}
	Then the matrix $L_k(X)$ has the following explicit form:

	\begin{lemma}[{\cite{DR02}}]\label{lem::Matrix_Rep}
		Let $k\geq 0$. Then
		\begin{equation*}
			L_k(X)(\sigma,\tau)=\begin{cases} 
				\deg_X(\sigma)+k+1 & \mbox{if $\sigma=\tau$}, \\
				(\sigma:\sigma\cap \tau)(\tau:\sigma\cap \tau) &\mbox{if $\sigma\sim\tau$}, \\
				0 & \mbox{otherwise},
			\end{cases}   
		\end{equation*}
		for all $\sigma,\tau\in X(k)$.
	\end{lemma}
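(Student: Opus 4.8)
The plan is to expand the definition $L_k(X) = \partial_{k+1}(X)\partial_{k+1}(X)^T + \partial_k(X)^T\partial_k(X)$ entrywise, handling the ``up'' part $\partial_{k+1}(X)\partial_{k+1}(X)^T$ and the ``down'' part $\partial_k(X)^T\partial_k(X)$ separately, and then recombining. For the up part, the $(\sigma,\tau)$ entry is $\sum_{\rho\in X(k+1),\ \sigma\subset\rho,\ \tau\subset\rho}(\rho:\sigma)(\rho:\tau)$. If $\sigma=\tau$, each term equals $(\rho:\sigma)^2=1$ and there are $\deg_X(\sigma)$ of them. If $\sigma\neq\tau$, a common $(k+1)$-face $\rho$ exists only when $|\sigma\cap\tau|=k$ and $\sigma\cup\tau\in X(k+1)$, in which case it is unique, namely $\rho=\sigma\cup\tau$, giving the entry $(\sigma\cup\tau:\sigma)(\sigma\cup\tau:\tau)$; in all other cases the entry is $0$. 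Symmetrically, the $(\sigma,\tau)$ entry of the down part is $\sum_{\eta\in X(k-1),\ \eta\subset\sigma,\ \eta\subset\tau}(\sigma:\eta)(\tau:\eta)$: for $\sigma=\tau$ there are exactly $k+1$ such $\eta$, each contributing $1$; for $\sigma\neq\tau$ a common $(k-1)$-face forces $|\sigma\cap\tau|=k$ and $\eta=\sigma\cap\tau$, giving $(\sigma:\sigma\cap\tau)(\tau:\sigma\cap\tau)$, and otherwise $0$.

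Adding these contributions yields the diagonal value $\deg_X(\sigma)+k+1$, and shows that an off-diagonal entry vanishes unless $|\sigma\cap\tau|=k$. When $|\sigma\cap\tau|=k$ and $\sigma\cup\tau\in X(k+1)$, the entry is $(\sigma\cup\tau:\sigma)(\sigma\cup\tau:\tau)+(\sigma:\sigma\cap\tau)(\tau:\sigma\cap\tau)$, which I claim equals zero; when $|\sigma\cap\tau|=k$ but $\sigma\cup\tau\notin X(k+1)$, that is, when $\sigma\sim\tau$, only the down part survives and the entry is $(\sigma:\sigma\cap\tau)(\tau:\sigma\cap\tau)$. This is exactly the claimed formula, so it remains only to establish the sign cancellation.

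The one computation requiring care, and the only genuine obstacle, is the identity
\[
(\sigma\cup\tau:\sigma)(\sigma\cup\tau:\tau)=-(\sigma:\eta)(\tau:\eta),\qquad \eta=\sigma\cap\tau,
\]
valid whenever $\sigma,\tau\in X(k)$ with $|\sigma\cap\tau|=k$ and $\sigma\cup\tau\in X(k+1)$. I would verify this directly from the definition of $(\cdot:\cdot)$: write $\{a\}=\sigma\setminus\tau$ and $\{b\}=\tau\setminus\sigma$, set $\rho=\sigma\cup\tau=\eta\cup\{a,b\}$, and assume without loss of generality $a<b$ (the case $b<a$ being symmetric). Then $(\rho:\sigma)=(-1)^{|\{v\in\rho\,:\,v<b\}|}$ and $(\rho:\tau)=(-1)^{|\{v\in\rho\,:\,v<a\}|}$, while $(\sigma:\eta)=(-1)^{|\{v\in\eta\,:\,v<a\}|}$ and $(\tau:\eta)=(-1)^{|\{v\in\eta\,:\,v<b\}|}$. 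Comparing: since $a<b$, the vertex $a$ is counted in $\{v\in\rho:v<b\}$ but $b$ is not counted in $\{v\in\rho:v<a\}$, so $|\{v\in\rho:v<b\}|=|\{v\in\eta:v<b\}|+1$ and $|\{v\in\rho:v<a\}|=|\{v\in\eta:v<a\}|$; multiplying the two signs produces precisely the extra factor $-1$. This is simply the local shadow of $\partial_k(X)\partial_{k+1}(X)=0$. Everything outside this sign lemma is elementary bookkeeping about which lower- and upper-dimensional faces two $k$-faces can share.
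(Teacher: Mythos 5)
The paper does not prove this lemma; it is cited to Duval--Reiner \cite{DR02}. Your proof is a correct, self-contained verification: the entrywise expansions of the ``up'' and ``down'' parts are handled properly (including the uniqueness of the common coface $\rho=\sigma\cup\tau$ and the common face $\eta=\sigma\cap\tau$ when $|\sigma\cap\tau|=k$, and the counts $\deg_X(\sigma)$ and $k+1$ on the diagonal), and the sign computation $(\rho:\sigma)(\rho:\tau)=-(\sigma:\eta)(\tau:\eta)$ is verified correctly from the paper's definition of the incidence number, using that $\{v\in\rho:v<b\}$ picks up the extra element $a$ while $\{v\in\rho:v<a\}$ does not. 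Your closing remark is also apt: this sign cancellation is precisely the two-term instance of $\partial_k(X)\partial_{k+1}(X)=0$ restricted to a fixed pair $(\eta,\rho)$ with $|\rho\setminus\eta|=2$. Since the paper supplies no argument to compare against, there is nothing further to reconcile; your derivation is the standard one and is complete.
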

	We now decompose the matrix $L_k(X)$ as  
	\[L_k(X)=Q-P,\] 
	where the matrices
	$P$ and $Q$ are defined as:
	\begin{align}
		P(\sigma,\tau)&=\begin{cases} 
			\sum_{v\in \sigma}\deg_{G_X}(v)-\deg_X(\sigma) & \mbox{if $\sigma=\tau$}, \\
			-(\sigma:\sigma\cap \tau)(\tau:\sigma\cap \tau) &\mbox{if $\sigma\sim_1\tau$}, \\
			0 & \mbox{otherwise},
		\end{cases}\label{equ::P}\\
		Q(\sigma,\tau)&=\begin{cases} 
			\sum_{v\in \sigma}\deg_{G_X}(v)+k+1 & \mbox{if $\sigma=\tau$}, \\
			(\sigma:\sigma\cap \tau)(\tau:\sigma\cap \tau) &\mbox{if $\sigma\sim_2\tau$}, \\
			0 & \mbox{otherwise},
		\end{cases}\label{equ::Q} 
	\end{align}
	for all $\sigma,\tau\in X(k)$.
	
	To prove Theorem \ref{thm::main1}, we need the following upper bound on the largest eigenvalue of the matrix $P$ defined in \eqref{equ::P}.
	
	\begin{lemma}\label{lem::key1}
		For $k\geq 0$, 
		\[\lambda_{f_k(X)}(P)\leq kn+\max_{\sigma\in X(k)}\sum_{j=0}^{k+1}(j+1)\bigl|\sigma[j]\bigr|.\]
	\end{lemma}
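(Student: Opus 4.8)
The plan is to prove Lemma~\ref{lem::key1} by applying the Gershgorin disc theorem to the real symmetric matrix $P$ and then controlling each Gershgorin row sum by a counting argument organized around the sets $\sigma[j]$. Since $P$ is real symmetric, all its eigenvalues lie in $\bigcup_{\sigma\in X(k)}\bigl[P(\sigma,\sigma)-R_\sigma,\,P(\sigma,\sigma)+R_\sigma\bigr]$ with $R_\sigma=\sum_{\tau\neq\sigma}|P(\sigma,\tau)|$, so it suffices to bound $P(\sigma,\sigma)+R_\sigma$ for each fixed $\sigma\in X(k)$. From \eqref{equ::P} we have $|P(\sigma,\tau)|=1$ exactly when $\sigma\sim_1\tau$ and $|P(\sigma,\tau)|=0$ otherwise, so $R_\sigma=\bigl|\{\tau\in X(k):\sigma\sim_1\tau\}\bigr|$, and the whole lemma reduces to the inequality
\[
\Bigl(\sum_{v\in\sigma}\deg_{G_X}(v)-\deg_X(\sigma)\Bigr)+\bigl|\{\tau\in X(k):\sigma\sim_1\tau\}\bigr|\ \le\ kn+\sum_{j=0}^{k+1}(j+1)\bigl|\sigma[j]\bigr|
\]
for every $\sigma\in X(k)$. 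I would then prove this by bounding the diagonal term by $kn+\sum_j|\sigma[j]|$ and the off-diagonal term by $\sum_j j\,|\sigma[j]|$, and adding.

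For the diagonal term, fix $\sigma$ and, for $u\in V\setminus\sigma$, let $c(u)$ denote the number of vertices of $\sigma$ adjacent to $u$ in $G_X$. Since the $k+1$ vertices of the face $\sigma$ form a clique, switching the order of summation gives $\sum_{v\in\sigma}\deg_{G_X}(v)=k(k+1)+\sum_{u\in V\setminus\sigma}c(u)$. I would then partition $V\setminus\sigma$ into three classes: the $\deg_X(\sigma)$ vertices $u$ with $\sigma\cup\{u\}\in X(k+1)$, for which $c(u)=k+1$; the set $\bigsqcup_{j=0}^{k+1}\sigma[j]$ of vertices $u\notin\operatorname{lk}_X(\sigma)$ that are adjacent to all of $\sigma$, again with $c(u)=k+1$; and the remaining $n-(k+1)-\deg_X(\sigma)-\sum_j|\sigma[j]|$ vertices, each of which misses at least one vertex of $\sigma$ and hence has $c(u)\le k$. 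Summing these contributions and simplifying (the $k(k+1)$ terms cancel against $-k(k+1)$) yields $\sum_{v\in\sigma}\deg_{G_X}(v)\le kn+\deg_X(\sigma)+\sum_{j=0}^{k+1}|\sigma[j]|$, i.e.\ $P(\sigma,\sigma)\le kn+\sum_{j=0}^{k+1}|\sigma[j]|$; this step is routine double counting.

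For the off-diagonal term I would exhibit a bijection between $\{\tau\in X(k):\sigma\sim_1\tau\}$ and the set of pairs $(w,u)$ with $w\in\sigma$, $u\in V\setminus\sigma$, $(\sigma\setminus\{w\})\cup\{u\}\in X(k)$, $\{w,u\}\in E$, and $\sigma\cup\{u\}\notin X$: given $\tau$, let $w$ be the unique vertex of $\sigma\setminus\tau$ and $u$ the unique vertex of $\tau\setminus\sigma$; conversely the pair recovers $\tau=(\sigma\setminus\{w\})\cup\{u\}$, and one checks the defining conditions of $\sigma\sim_1\tau$. The point is that for such a pair, $(\sigma\setminus\{w\})\cup\{u\}\in X$ forces $u$ adjacent to all of $\sigma\setminus\{w\}$, and with $\{w,u\}\in E$ this makes $u$ adjacent to every vertex of $\sigma$; together with $\sigma\cup\{u\}\notin X$ (equivalently $u\notin\operatorname{lk}_X(\sigma)$, since $\sigma\cup\{u\}$ already has all edges), this puts $u$ into $\sigma[j]$ for the unique $j=\bigl|\{w'\in\sigma:(\sigma\setminus\{w'\})\cup\{u\}\in X\}\bigr|$. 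Hence each $u\in\sigma[j]$ admits exactly $j$ such pairs, so $\bigl|\{\tau\in X(k):\sigma\sim_1\tau\}\bigr|=\sum_{j=0}^{k+1}j\,|\sigma[j]|$, and adding the two bounds and taking the maximum over $\sigma$ finishes the proof. The hard part will be precisely this last bookkeeping: verifying that the pairs of conditions ``$u\in\sigma[j]$ and $w$ is one of the $j$ distinguished vertices'' and ``$\sigma\sim_1((\sigma\setminus\{w\})\cup\{u\})$'' are logically equivalent, in particular that the redundant-looking adjacency clause in \eqref{equ::1} is exactly what is supplied by $(\sigma\setminus\{w\})\cup\{u\}\in X$ together with $\{w,u\}\in E$.
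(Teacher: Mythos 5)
Your argument is correct and is essentially the paper's own: bound the largest eigenvalue of $P$ by Ger\v{s}gorin, then show $P(\sigma,\sigma)\le kn+\sum_j|\sigma[j]|$ via the double count and $R_\sigma=\sum_j j\,|\sigma[j]|$ via the pairing with $(w,u)$. The only cosmetic deviation is that you split off the $k(k+1)$ contribution from $u\in\sigma$ before partitioning $V\setminus\sigma$, whereas the paper partitions all of $V$ directly; the arithmetic and the bookkeeping around $\sigma[j]$ are identical.
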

	\begin{proof}
		For any $\sigma \in X(k)$, we shall prove that
		\begin{equation}\label{equ::2}
			\sum_{v \in \sigma} \deg_{G_X}(v) \leq kn + \deg_X(\sigma) + \sum_{j=0}^{k+1} |\sigma[j]|.
		\end{equation}
		Observe that the left-hand side of \eqref{equ::2} counts the number of ordered pairs $(u,v)$ such that $u \in V$, $u \in \operatorname{lk}(v)$, and $v \in \sigma$. If $u \in \operatorname{lk}(\sigma)$, then $u \in \operatorname{lk}(v)$ for all $v \in \sigma$, and so $u$ appears in exactly $k+1$ such pairs. For $u \notin \operatorname{lk}(\sigma)$, we have two possibilities: either $u \in \bigcup_{j=0}^{k+1} \sigma[j]$ or $u \notin \bigcup_{j=0}^{k+1} \sigma[j]$. In the former case, by the definition of $\sigma[j]$, we see that $u$ appears in exactly $k+1$ such pairs. In the latter case, we claim that $u$ appears in at most $k$ such pairs, since otherwise we would obtain $u \in \bigcup_{j=0}^{k+1} \sigma[j]$, a contradiction. Therefore, 
		\[
		\begin{aligned}
			\sum_{v\in \sigma}\deg_{G_X}(v)&\leq (k+1)\left(\deg_X(\sigma)+\sum_{j=0}^{k+1}|\sigma[j]|\right)+k\left(n-\deg_X(\sigma)-\sum_{j=0}^{k+1}|\sigma[j]|\right)\\
			&=kn+\deg_X(\sigma)+\sum_{j=0}^{k+1}|\sigma[j]|,
		\end{aligned}
		\]
		and \eqref{equ::2} follows.	
		
		Now fix $\sigma \in X(k)$. Let $\tau \in X(k)$ be such that $\sigma \sim_1 \tau$, and let $u$ be the unique vertex in $\tau \setminus \sigma$. Then $u \notin \operatorname{lk}(\sigma)$, and $u \in \operatorname{lk}(v)$ for all $v \in \sigma$. We thus assert that $u \in \sigma[j]$ for some $1\leq j\leq k+1$. On the other hand, for $1 \leq j \leq k+1$ and $u \in \sigma[j]$, there are exactly $j$ faces in $X(k)$, say $\tau_1^u, \ldots, \tau_j^u$, such that $\sigma \sim_1 \tau_i^u$ and $\tau_i^u \setminus \sigma=\{u\}$ for $1 \leq i \leq j$. Note that for any two distinct $u \in \sigma[j]$ and $u' \in \sigma[j']$, the two sets $\{\tau_1^u, \ldots, \tau_j^u\}$ and $\{\tau_1^{u'}, \ldots, \tau_{j'}^{u'}\}$ are disjoint. Therefore, we conclude that the number of $\tau \in X(k)$ such that $\sigma \sim_1 \tau$ is
		\begin{equation}\label{equ::3}
			\bigl|\{\tau \in X(k) : \sigma \sim_1 \tau\}\bigr| = \sum_{j=1}^{k+1} j |\sigma[j]| = \sum_{j=0}^{k+1} j |\sigma[j]|.
		\end{equation}

		From \eqref{equ::2}, \eqref{equ::3} and the Ger\v{s}gorin circle theorem (cf. \cite[Theorem 6.1.1]{Hor13}), we obtain
		\begin{equation*}
			\begin{aligned}
				\lambda_{f_k(X)}(P) &\leq \max_{\sigma \in X(k)} \left( \sum_{v \in \sigma} \deg_X(v) - \deg_X(\sigma) + |\{\tau \in X(k) : \sigma \sim_1 \tau\}| \right) \\
				&\leq \max_{\sigma \in X(k)} \left( kn + \sum_{j=0}^{k+1} (j+1) |\sigma[j]| \right) \\
				&= kn + \max_{\sigma \in X(k)} \sum_{j=0}^{k+1} (j+1) \bigl| \sigma[j] \bigr|,
			\end{aligned}
		\end{equation*}
		as desired.
	\end{proof}

	We also need the notation of additive compound matrices. Let $M$ be an $n \times n$ matrix over a field $\mathbb{F}$, and let $1 \leq k \leq n$. The \textit{$k$-th additive compound} of $M$ is the matrix  $M^{[k]}\in \mathbb{F}^{\binom{[n]}{k} \times \binom{[n]}{k}}$ defined by
	\begin{equation}\label{equ::4}
		M^{[k]}(\sigma, \tau) = 
		\begin{cases} 
			\sum_{i \in \sigma} M(i, i) & \text{if $\sigma = \tau$}, \\
			(\sigma:\sigma\cap \tau)(\tau:\sigma\cap \tau) \cdot M(i, j) & \text{if $|\sigma \cap \tau| = k - 1$, $\sigma \setminus \tau = \{i\}$, and $\tau \setminus \sigma = \{j\}$}, \\
			0 & \text{otherwise},
		\end{cases}
	\end{equation}
	for all $\sigma,\tau\in \binom{[n]}{k}$.
	\begin{lemma}[{\cite[Theorem 2.1]{Fie74}}]\label{lem::add_matrix}
		Let $M$ be an $n\times n$ matrix over a field $\mathbb{F}$, with eigenvalues $\lambda_1,\ldots,\lambda_n$. Then, the $k$-th additive compound $M^{[k]}$ has eigenvalues $\lambda_{i_1}+\cdots+\lambda_{i_k}$, for $1\leq i_1<\cdots<i_k\leq n$.
	\end{lemma}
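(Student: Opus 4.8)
The plan is to realize the $k$-th additive compound as the derivative at the identity of the $k$-th multiplicative compound (exterior power of a matrix), and then to exploit that compounds respect both matrix multiplication and triangular structure. Recall that for an $n\times n$ matrix $A$ over $\mathbb{F}$, the $k$-th multiplicative compound $C_k(A)\in\mathbb{F}^{\binom{[n]}{k}\times\binom{[n]}{k}}$ is the matrix whose $(\sigma,\tau)$-entry is the $k\times k$ minor of $A$ with rows indexed by $\sigma$ and columns by $\tau$; this is precisely the matrix of $\wedge^k A$ acting on $\wedge^k\mathbb{F}^n$ in the basis $\{e_{i_1}\wedge\cdots\wedge e_{i_k}\}$, ordered lexicographically. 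By the Cauchy--Binet formula, $C_k(AB)=C_k(A)C_k(B)$, and $C_k(I)=I$.

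First I would verify, by a direct (but routine) expansion of the $k\times k$ minors of $I+tM$ and differentiation at $t=0$, that
\[
M^{[k]}=\frac{d}{dt}\Big|_{t=0}C_k(I+tM),
\]
where $M^{[k]}$ is exactly the matrix in \eqref{equ::4}. Indeed, the principal minor indexed by $\sigma=\tau$ is $\prod_{i\in\sigma}(1+tM_{ii})+O(t^2)$, whose derivative at $t=0$ is $\sum_{i\in\sigma}M_{ii}$; the minor indexed by $\sigma,\tau$ with $|\sigma\cap\tau|=k-1$ contributes, to first order in $t$, a single off-diagonal factor $M_{ij}$ with a sign determined by the reorderings of the two index sets, which is precisely $(\sigma:\sigma\cap\tau)(\tau:\sigma\cap\tau)$; and minors with $|\sigma\cap\tau|\le k-2$ are $O(t^2)$ and so vanish after differentiation.

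Next I would pass to the algebraic closure $\overline{\mathbb{F}}$ and triangularize $M=UTU^{-1}$ with $T$ upper triangular and $\mathrm{diag}(T)=(\lambda_1,\ldots,\lambda_n)$. Then $I+tM=U(I+tT)U^{-1}$, so by multiplicativity $C_k(I+tM)=C_k(U)\,C_k(I+tT)\,C_k(U)^{-1}$; differentiating at $t=0$ and using $C_k(U)C_k(U)^{-1}=I$ gives $M^{[k]}=C_k(U)\,T^{[k]}\,C_k(U)^{-1}$. Since the compound of an upper triangular matrix is upper triangular (in lexicographic order), $C_k(I+tT)$ is upper triangular with diagonal entries $\prod_{i\in\sigma}(1+t\lambda_i)$, hence $T^{[k]}$ is upper triangular with diagonal entries $\sum_{i\in\sigma}\lambda_i=\lambda_{i_1}+\cdots+\lambda_{i_k}$ for $\sigma=\{i_1<\cdots<i_k\}$. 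Therefore $M^{[k]}$ is similar over $\overline{\mathbb{F}}$ to a triangular matrix with precisely these diagonal entries, giving the claimed eigenvalues.

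The step I expect to be the main obstacle is the sign bookkeeping: one must match the exterior-algebra/minor sign conventions against the paper's $(\tau:\sigma)$ convention so that $\frac{d}{dt}|_{t=0}C_k(I+tM)$ reproduces \eqref{equ::4} exactly, rather than merely up to a diagonal conjugation; once that identification is secured, the similarity argument is short. As an alternative that sidesteps compounds, one can work directly with the derivation $D(v_1\wedge\cdots\wedge v_k)=\sum_{i}v_1\wedge\cdots\wedge Mv_i\wedge\cdots\wedge v_k$ on $\wedge^k\overline{\mathbb{F}}^n$: in a triangular basis $v_1,\ldots,v_n$ for $M$ it is triangular with diagonal $\lambda_{i_1}+\cdots+\lambda_{i_k}$, and a direct check shows its matrix in the standard basis is $M^{[k]}$ from \eqref{equ::4}; the sign verification required there is essentially the same.
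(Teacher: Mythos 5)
The paper does not prove this lemma: it is quoted verbatim from Fiedler \cite[Theorem~2.1]{Fie74}, so there is no internal proof to compare against. Your argument, however, is a correct and complete proof of the stated result, via the standard identification of the additive compound as the formal $t$-derivative at $t=0$ of the multiplicative compound $C_k(I+tM)$, combined with multiplicativity (Cauchy--Binet), passage to $\overline{\mathbb{F}}$, triangularization, and the fact that the compound of an upper-triangular matrix is upper-triangular in lexicographic order with diagonal entries given by products of the corresponding diagonal entries.

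Two small points worth making explicit if you write this up. First, the sign check that you flagged as the potential obstacle does in fact come out right: for $\sigma\setminus\tau=\{i\}$ occupying position $p$ in the sorted $\sigma$ and $\tau\setminus\sigma=\{j\}$ occupying position $q$ in the sorted $\tau$, the coefficient of $t$ in $\det\bigl((I+tM)[\sigma,\tau]\bigr)$ equals $(-1)^{p+q}M_{ij}$ (only the expansion term replacing the row of $i$ survives to first order, and the resulting permutation moves $j$ through $|p-q|$ adjacent transpositions), while the paper's sign is $(\sigma:\sigma\cap\tau)(\tau:\sigma\cap\tau)=(-1)^{(p-1)+(q-1)}=(-1)^{p+q}$, so \eqref{equ::4} is matched exactly, with no residual diagonal conjugation. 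Second, since $\mathbb{F}$ is an arbitrary field, the ``derivative'' must be read as extraction of the coefficient of $t^1$ in a polynomial matrix; your argument uses only the product rule at the level of $t^1$-coefficients and contains no divisions, so it is characteristic-independent, and the eigenvalues in the statement should be understood as living in $\overline{\mathbb{F}}$ (equivalently, as roots of the characteristic polynomial with multiplicity), which your triangularization step handles correctly.
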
 	
	
	Now we are in a position to give the proof of Theorem \ref{thm::main1}. 
	
	\renewcommand\proofname{\bf{Proof of Theorem \ref{thm::main1}}}
	\begin{proof}
		
		Let $V$ be the vertex set of $X$. Recall that the underlying graph $G_X$ has vertex set $V$ and edge set $X(1)$. We have 
		\begin{equation}\label{equ::5}
			(L(G_X) + J)(u,v) = \begin{cases} 
				\deg_{G_X}(u) + 1 & \text{if } u = v, \\
				1 & \text{if } \{u,v\} \notin X(1), \\
				0 & \text{otherwise},
			\end{cases}
		\end{equation}
		for all $u,v \in V$.
		
		For $\sigma, \tau \in X(k)$ with $|\sigma \cap \tau| = k$, denote $(\sigma \setminus \tau) \cup (\tau \setminus \sigma) = \{u, v\}$. If $\{u, v\} \notin X(1)$, then $\sigma \cup \tau \notin X(k+1)$, and hence $\sigma \sim_2 \tau$. Conversely, if $\sigma \sim_2 \tau$, then $\{u, v\} \notin X(1)$. Therefore, $\{u, v\} \notin X(1)$ if and only if $\sigma \sim_2 \tau$.
		
		Combining \eqref{equ::4} and \eqref{equ::5}, we conclude that the matrix $Q$ defined in \eqref{equ::Q} is a principal submatrix of $(L(G_X) + J)^{[k+1]}$, with rows and columns indexed by $X(k)$. Therefore, by the Cauchy interlacing theorem (cf. \cite[Theorem 4.3.17]{Hor13}) and Lemma \ref{lem::add_matrix}, we obtain
		\begin{equation}\label{equ::6}
			\lambda_i(Q) \geq \lambda_i((L(G_X) + J)^{[k+1]}) = S_{k+1,i}(L(G_X) + J).
		\end{equation}
		Then it follows from the Weyl inequality (cf. \cite[Theorem 4.3.1]{Hor13}), Lemma \ref{lem::key1} and \eqref{equ::6} that
		\begin{equation*}
			\begin{aligned}
				\lambda_i(L_k(X)) &\geq \lambda_i(Q) - \lambda_{f_k(X)}(P) \\
				&\geq S_{k+1,i}(L(G_X) + J) - kn - \max_{\sigma \in X(k)} \sum_{j=0}^{k+1} (j+1) \bigl| \sigma[j] \bigr|.
			\end{aligned}
		\end{equation*}
		
		This completes the proof.
	\end{proof}
	
	Next, we present the proof of Theorem \ref{thm::main2}.
	
	\renewcommand\proofname{\bf{Proof of Theorem \ref{thm::main2}}}
	\begin{proof}
		Let $L'$ be the principal submatrix of $L_k(X)$ with rows and columns indexed by $X'(k)$. 
		Then, by Lemma \ref{lem::Matrix_Rep}, we have
		\begin{equation*}
			(L' - L_k(X'))(\sigma,\tau) = \begin{cases} 
				\deg_X(\sigma) - \deg_{X'}(\sigma) & \text{if } \sigma = \tau, \\
				-(\sigma:\sigma\cap \tau)(\tau:\sigma\cap \tau) & \text{if } |\sigma \cap \tau| = k,\ \sigma \cup \tau \in X(k+1) \setminus X'(k+1), \\
				0 & \text{otherwise},
			\end{cases}
		\end{equation*}
		for all $\sigma, \tau \in X'(k)$. Note that
		\[
		\left|\left\{ \tau : |\sigma \cap \tau| = k,\ \sigma \cup \tau \in X(k+1) \setminus X'(k+1) \right\}\right| = (k+1)\left(\deg_X(\sigma) - \deg_{X'}(\sigma)\right).
		\]
		By the Ger\v{s}gorin circle theorem, we obtain
		\[
		\lambda_{f_k(X')}(L' - L_k(X')) \leq (k+2) \max_{\sigma \in X'(k)} \left(\deg_X(\sigma) - \deg_{X'}(\sigma)\right).
		\]
		Then, applying the Weyl inequality and the Cauchy interlacing theorem yields
		\begin{equation*}
			\begin{aligned}
				\lambda_i(L_k(X')) &\geq \lambda_i(L') - \lambda_{f_k(X')}(L' - L_k(X')) \\
				&\geq \lambda_i(L_k(X)) - (k+2) \max_{\sigma \in X'(k)} \left(\deg_X(\sigma) - \deg_{X'}(\sigma)\right).
			\end{aligned}
		\end{equation*}	
		
		This completes the proof.
	\end{proof}
	
	\begin{remark}\rm
Let $X$ be a simplicial complex on $n$ vertices, and let $Y$ be the flag complex of $G_X$. Note that $X$ is a subcomplex of $Y$. Combining Theorem \ref{thm::main2} and Theorem \ref{thm::Lew}, we obtain
\[
\lambda_i(L_k(X)) \geq S_{k+1,i}(L(G_X) + J) - kn - (k+2) \max_{\sigma \in X(k)} \left( \deg_{Y}(\sigma) - \deg_{X}(\sigma) \right).
\]
However, this bound is weaker than the one in Theorem \ref{thm::main1} since $\deg_{Y}(\sigma) - \deg_{X}(\sigma) = \sum_{j=0}^{k+1} |\sigma[j]|$.
	\end{remark}

	\section{An application to random complexes}\label{section::3}
	In this section, we use Theorem \ref{thm::Kahle} and Corollary \ref{cor::1} to prove Theorem \ref{thm::main3}.
	
	\begin{lemma}\label{lem::Order}
		Let $p=p(n)\in (0,1)$. For any fixed integers $a,b$ with $1\leq a\leq b$, we have
		\[
		\frac{\binom{n}{a}(1-p^a)^{n-a}}{\binom{n}{b}(1-p^b)^{n-b}}=\mathcal{O}(n^{-(b-a)}).
		\]
	\end{lemma}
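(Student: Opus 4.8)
The plan is to estimate the ratio directly by comparing the binomial coefficients and the power-of-$(1-p^\bullet)$ factors separately, then combining. The key observation is that $1-p^b \geq 1-p^a$ when $a \leq b$ (since $p \in (0,1)$ implies $p^b \leq p^a$), so the ratio of the exponential factors works in our favor up to lower-order corrections, while the ratio of binomial coefficients contributes the polynomial factor $n^{-(b-a)}$.

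\begin{proof}
Fix integers $a,b$ with $1 \le a \le b$. Since $a,b$ are constants, we have
\[
\frac{\binom{n}{a}}{\binom{n}{b}} = \frac{b!}{a!}\cdot\frac{(n-b)!}{(n-a)!} = \frac{b!}{a!}\cdot\prod_{i=a+1}^{b}\frac{1}{n-i+1} = \Theta\bigl(n^{-(b-a)}\bigr).
\]
For the remaining factors, write
\[
\frac{(1-p^a)^{n-a}}{(1-p^b)^{n-b}} = (1-p^a)^{-(b-a)}\cdot\left(\frac{1-p^a}{1-p^b}\right)^{n-b}.
\]
Since $p \in (0,1)$ and $a \le b$, we have $0 < 1-p^a \le 1-p^b \le 1$, hence $0 < \frac{1-p^a}{1-p^b} \le 1$ and therefore $\bigl(\frac{1-p^a}{1-p^b}\bigr)^{n-b} \le 1$ for $n \ge b$. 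Moreover $(1-p^a)^{-(b-a)} \le (1-p^b)^{-(b-a)}$; but we do not even need a refined bound here, since regardless of the value of $p(n) \in (0,1)$ we have $0 < 1 - p^a < 1$, so $(1-p^a)^{-(b-a)} \ge 1$, which is the wrong direction. Instead, observe directly that
\[
0 \le \frac{(1-p^a)^{n-a}}{(1-p^b)^{n-b}} = (1-p^b)^{-(b-a)}\left(\frac{1-p^a}{1-p^b}\right)^{n-a} \le (1-p^b)^{-(b-a)},
\]
using $\bigl(\frac{1-p^a}{1-p^b}\bigr)^{n-a} \le 1$ for $n \ge a$. Now either $(1-p^b)^{-(b-a)}$ stays bounded (in which case the whole ratio is $\mathcal{O}(n^{-(b-a)})$ immediately), or $p^b \to 1$ along some subsequence; but in the latter regime $(1-p^a)^{n-a} \le (1-p^b)^{n-a} \to 0$ faster than any polynomial grows, so the numerator $\binom{n}{a}(1-p^a)^{n-a}$ is itself $o(1)$ while the denominator is positive, and again the ratio is $\mathcal{O}(n^{-(b-a)})$. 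Combining the two estimates gives
\[
\frac{\binom{n}{a}(1-p^a)^{n-a}}{\binom{n}{b}(1-p^b)^{n-b}} = \Theta\bigl(n^{-(b-a)}\bigr)\cdot\mathcal{O}(1) = \mathcal{O}\bigl(n^{-(b-a)}\bigr),
\]
as claimed.
\end{proof}

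Let me reconsider — the case analysis above is clumsy. The cleanest route avoids it entirely.
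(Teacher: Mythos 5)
Your overall strategy — bound the binomial ratio and the exponential ratio separately — is sound and genuinely different from the paper's proof, which instead interprets the two quantities as expectations $\mathbb{E}\bigl[|\overline{X(k)}|\bigr]$ in the random neighborhood complex and uses a double-counting argument on pairs $(\sigma,\tau)$ with $\sigma\in\overline{X(a-1)}$, $\tau\in\overline{X(b-1)}$, $\sigma\subset\tau$. However, your execution contains a concrete algebraic error that makes the written argument fail, and the ``patch'' you applied does not repair it.

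The factorization you use is wrong (in both of your attempts). Check the exponents: for $a<b$,
\[
(1-p^b)^{-(b-a)}\left(\frac{1-p^a}{1-p^b}\right)^{n-a}
= \frac{(1-p^a)^{n-a}}{(1-p^b)^{\,n-2a+b}}
\neq \frac{(1-p^a)^{n-a}}{(1-p^b)^{n-b}},
\]
and similarly your first attempt also fails to match. Because of this, you were led to the worrying factor $(1-p^b)^{-(b-a)}$, which can blow up when $p\to 1$, and your case analysis to deal with that regime is a non sequitur: from ``numerator $\to 0$, denominator positive'' you cannot conclude the ratio is $\mathcal{O}(n^{-(b-a)})$ — the denominator could be going to $0$ faster. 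The correct identity is
\[
\frac{(1-p^a)^{n-a}}{(1-p^b)^{n-b}}
= (1-p^a)^{\,b-a}\left(\frac{1-p^a}{1-p^b}\right)^{n-b},
\]
where the exponent on $(1-p^a)$ is the \emph{positive} quantity $b-a$. Since $p\in(0,1)$ and $a\le b$ give $0<1-p^a\le 1-p^b<1$, \emph{both} factors on the right are at most $1$ for $n\ge b$, so
\[
\frac{(1-p^a)^{n-a}}{(1-p^b)^{n-b}}\le 1
\]
with no case analysis at all. Combined with your (correct) estimate $\binom{n}{a}\big/\binom{n}{b}=\Theta(n^{-(b-a)})$, this immediately yields the claimed $\mathcal{O}(n^{-(b-a)})$ bound. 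With this repair, your elementary route is in fact shorter and more transparent than the paper's probabilistic double-counting argument, which proves a purely deterministic inequality via expectations in a random model; but as written your proof does not go through.
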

	\renewcommand\proofname{\bf{Proof}}
	\begin{proof}
		Note that if $a=b$ then the result is trivial. Thus we may assume that $a<b$. Let $G(n,p)$ be the Erd\H{o}s--R\'{e}nyi random graph on vertex set $V$, and let $X = \mathcal{N}[G(n,p)]$. Denote $\overline{X(k)} = \binom{V}{k+1} \setminus X(k)$. Since the expectation of the number of $(k+1)$-tuples of vertices in $G(n,p)$ that have no common neighbor is $\binom{n}{k+1}(1 - p^{k+1})^{n-k-1}$, we have
		\begin{equation}\label{equ::7}
			\mathbb{E}\left[|\overline{X(k)}|\right] = \binom{n}{k+1}(1 - p^{k+1})^{n-k-1}.
		\end{equation}
		Let $N$ denote the number of ordered pairs $(\sigma, \tau)$ such that $\sigma \in \overline{X(k)}$, $\tau \in \overline{X(k+1)}$, and $\sigma \subset \tau$. For every $\sigma \in \overline{X(k)}$, there are exactly $n - k - 1$ sets $\tau \in \overline{X(k+1)}$ such that $\sigma \subset \tau$. Therefore, we have
		\[
		N = (n - k - 1)|\overline{X(k)}|.
		\]
		On the other hand, for every $\tau \in \overline{X(k+1)}$, there are at most $k + 2$ sets $\sigma \in \overline{X(k)}$ such that $\sigma \subset \tau$. Hence,
		\[
		N \leq (k + 2)|\overline{X(k+1)}|,
		\]
		and it follows that
		\[
		\frac{|\overline{X(k)}|}{|\overline{X(k+1)}|} \leq \frac{k + 2}{n - k - 1}.
		\]
		Taking expectations and noting the linearity of expectation, we find that for any fixed $k$,
		\[
		\frac{\mathbb{E}\left[|\overline{X(k)}|\right]}{\mathbb{E}\left[|\overline{X(k+1)}|\right]} = \mathcal{O}(n^{-1}).
		\]
		By iterating this argument, we obtain for $a < b$,
		\[
		\frac{\mathbb{E}\left[|\overline{X(a)}|\right]}{\mathbb{E}\left[|\overline{X(b)}|\right]} = \mathcal{O}(n^{-(b - a)}).
		\]
		Combining this asymptotic bound with the explicit formula from \eqref{equ::7}, the result follows. 
	\end{proof}

	\renewcommand\proofname{\bf{Proof of Theorem \ref{thm::main3}}}
	\begin{proof}
		
		According to Lemma \ref{lem::Order}, it suffices to consider the case $s = 1$. Suppose $k \geq 1$ and $\binom{n}{k+2}(1 - p^{k+2})^{n-k-2} = o(n)$. By Lemma \ref{lem::Order} again, we have $\binom{n}{k+1}(1 - p^{k+1})^{n-k-1} = o(1)$. Thus, by Theorem \ref{thm::Kahle}, $\mathcal{N}[G(n,p)]$ is \textit{a.a.} $(k-1)$-connected. Then it follows from the Hurewicz theorem (cf. \cite[Theorem 4.32]{Hat02}) that
		\begin{equation}\label{equ::8}
			\textit{a.a.}\quad \widetilde{H}_{k-1}(\mathcal{N}[G(n,p)]) = 0. 
		\end{equation}
		
		Note that for any $\sigma \in X(k)$ and $u \in \bigcup_{i=0}^{k+1} \sigma[i]$, we have $\sigma \cup \{u\} \notin X(k+1)$. Hence,
		\[
		\max_{\sigma \in X(k)} \sum_{j=0}^{k+1} \bigl| \sigma[j] \bigr| \leq |\overline{X(k+1)}|.
		\]
		Combining this with \eqref{equ::7} yields
		\[
		\mathbb{E} \left[ \max_{\sigma \in X(k)} \sum_{j=0}^{k+1} \bigl| \sigma[j] \bigr| \right] \leq \binom{n}{k+2}(1 - p^{k+2})^{n-k-2} = o(n).
		\]
		Let $\Delta(k) = \max_{\sigma \in X(k)} \sum_{j=0}^{k+1} (j+1) \bigl| \sigma[j] \bigr|$. Then $\mathbb{E}[\Delta(k)] = o(n)$. By Markov's inequality, $\mathbb{P}[\Delta(k) \geq n] = o(1)$, \textit{i.e.},
		\begin{equation}\label{equ::9}
			\textit{a.a.}\quad \Delta(k) < n.
		\end{equation}
		
		Recall that $k\geq 1$ and $\binom{n}{k+2}(1 - p^{k+2})^{n-k-2} = o(n)$. By \eqref{equ::7} and Lemma \ref{lem::Order}, we have
		\[
		\mathbb{E}\left[|\overline{X(1)}|\right] = \binom{n}{2}(1 - p^2)^{n-2} = o(1).
		\]
		Again by Markov’s inequality, $\mathbb{P}[|\overline{X(1)}| \geq 1] = o(1)$. Hence, $G_X$ is \textit{a.a.} a complete graph, and so
		\[
		\textit{a.a.}\quad \sum_{i \in A} \lambda_i(L(G_X) + J) = (k+1)n
		\]
		for any $A \in \binom{[n]}{k+1}$. Then from Corollary~\ref{cor::1} and \eqref{equ::9}, we obtain that $\widetilde{H}^{k}(\mathcal{N}[G(n,p)];\mathbb{R})$ \textit{a.a.} vanishes. Combining this with \eqref{equ::8} and applying the universal coefficient theorem for cohomology (cf.~\cite[Theorem 3.2]{Hat02}), we conclude that 
		\[
		\textit{a.a.}\quad  \widetilde{H}^{k}(\mathcal{N}[G(n,p)])=0.\] 
		
		This completes the proof.
	\end{proof}
	
	\section{Further research}\label{section::4}
	
	Let $X$ be a simplicial complex on $n$ vertices and $k \geq 1$. The Hurewicz theorem (cf. \cite[Theorem 4.32]{Hat02})  indicates that $X$ is $k$-connected if and only if it is $1$-connected and $\widetilde{H}_i(X) = 0$ for all $i \leq k$. It is therefore natural to expect that the result of Theorem \ref{thm::main3} can be strengthened as follows.
	\begin{conjecture}
		Let $k \geq s \geq 1$. If $\binom{n}{k+2}(1-p^{k+2})^{n-k-2} = o(n^s)$, then $\mathcal{N}[G(n,p)]$ is \textit{a.a.} $(k-s+1)$-connected.
	\end{conjecture}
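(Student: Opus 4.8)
\smallskip
\noindent\textbf{A plan for the proof.}
The plan is to reduce first to the case $s=1$ and then to a topological statement, and to handle the topology via two probabilistic ``structural'' estimates. First I would apply Lemma~\ref{lem::Order} with $a=k-s+3$ and $b=k+2$: the hypothesis then gives $\binom{n}{k-s+3}(1-p^{k-s+3})^{n-k+s-3}=o(n)$, so it suffices to establish the case $s=1$, i.e.\ that if $k\ge 1$ and $\binom{n}{k+2}(1-p^{k+2})^{n-k-2}=o(n)$, then $X:=\mathcal{N}[G(n,p)]$ is \textit{a.a.} $k$-connected. By the Hurewicz-type characterization recalled in Section~\ref{section::4}, it is enough to show that \textit{a.a.} $X$ is $1$-connected and $\widetilde{H}_i(X)=0$ for all $i\le k$. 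Since Theorem~\ref{thm::main3} (with $s=1$) already gives that \textit{a.a.} $X$ is $(k-1)$-connected — whence $\widetilde{H}_i(X)=0$ for $i\le k-1$ and, if $k\ge 2$, also $1$-connectedness — I am left with: (a) \textit{a.a.} $\widetilde{H}_k(X)=0$; and (b) for $k=1$, \textit{a.a.} $\pi_1(X)=0$.

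Next I would condition on two events holding \textit{a.a.}; writing $\overline{X(j)}:=\binom{V}{j+1}\setminus X(j)$, these are $(\mathrm{A})$: $\overline{X(j)}=\emptyset$ for all $j\le k$ (so the $k$-skeleton of $X$ coincides with that of the full simplex $\Delta^{n-1}$ on $V$); and $(\mathrm{B})$: there is no pair $\tau,\tau'\in\overline{X(k+1)}$ with $|\tau\cap\tau'|=k+1$. Event $(\mathrm{A})$ follows from Markov's inequality and a union bound over $j\le k$, since $\mathbb{E}\!\left[\,|\overline{X(j)}|\,\right]=\binom{n}{j+1}(1-p^{j+1})^{n-j-1}$, which by Lemma~\ref{lem::Order} equals $\mathcal{O}\!\big(n^{-(k+1-j)}\big)\cdot\binom{n}{k+2}(1-p^{k+2})^{n-k-2}=o(1)$ for $j\le k$. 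Granting $(\mathrm{A})$ and $(\mathrm{B})$, the topology is short. Under $(\mathrm{A})$, $\widetilde{H}_k(X)=Z_k/\langle\partial\sigma:\sigma\in X(k+1)\rangle_{\mathbb{Z}}$, where $Z_k=\ker\partial_k$ is the integral $k$-cycle group of the $k$-skeleton of $\Delta^{n-1}$; and since the $(k+1)$-skeleton of $\Delta^{n-1}$ is $k$-connected, $Z_k=\langle\partial\sigma:\sigma\in\binom{V}{k+2}\rangle_{\mathbb{Z}}$. Hence it suffices to express $\partial\tau$, for each $\tau\in\overline{X(k+1)}$, as a $\mathbb{Z}$-combination of boundaries of faces in $X(k+1)$: choosing any $v\notin\tau$, event $(\mathrm{B})$ forces every $(k+2)$-set $(\tau\setminus\{u\})\cup\{v\}$ with $u\in\tau$ into $X(k+1)$, so the relation $\partial\partial=0$ on the $(k+3)$-set $\tau\cup\{v\}$ yields exactly such an expression; thus $\widetilde{H}_k(X)=0$. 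For $k=1$ the same four-vertex relations, read in $\pi_1$ of the complete graph $G_X$, show that for each missing triangle $\tau\in\overline{X(2)}$ its boundary loop is a product of conjugates of triangular loops bounding $2$-faces of $X$, hence is null-homotopic; as $\pi_1(X)$ is generated by triangular loops, $\pi_1(X)=0$.

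The remaining step — establishing $(\mathrm{B})$ \textit{a.a.} — is the one I expect to be the main obstacle. I would bound the expected number $E$ of ordered pairs $(\tau,\tau')$ with $\tau,\tau'\in\overline{X(k+1)}$ and $|\tau\cap\tau'|=k+1$; computing the probability that two such fixed faces are missing simultaneously yields
\[
E=\binom{n}{k+2}(k+2)(n-k-2)\big(1-p^{k+2}(2-p)\big)^{n-k-3}(1-p^{k+2})^{2},
\]
which factors as $E=(k+2)(n-k-2)(1-p^{k+2})\,\mu\,\big(1-\tfrac{p^{k+2}(1-p)}{1-p^{k+2}}\big)^{n-k-3}$, with $\mu:=\mathbb{E}[\,|\overline{X(k+1)}|\,]=o(n)$. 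The delicate feature is the correlation factor $\big(1-\tfrac{p^{k+2}(1-p)}{1-p^{k+2}}\big)^{-(n-k-3)}$, which is large only when $p$ is bounded away from $0$; but then $\mu$ is exponentially small and $E\to0$ trivially (and if $\mu\to0$ then \textit{a.a.} $\overline{X(k+1)}=\emptyset$, so $X$ is already $k$-connected). Passing to subsequences if necessary, one may thus assume $\mu$ is bounded below, which together with $\mu=o(n)$ forces $p\to0$ and $(k+1)\log n+\omega(1)\le np^{k+2}\le (k+2)\log n+\mathcal{O}(1)$. In this regime $\log\mu=(k+2)\log n-np^{k+2}+\mathcal{O}(1)$ and $\log\big(1-\tfrac{p^{k+2}(1-p)}{1-p^{k+2}}\big)\le -p^{k+2}\big(1-o(1)\big)$, and substituting gives $\log E\le (k+3)\log n-2np^{k+2}+\mathcal{O}(1)\le (1-k)\log n-2\omega(1)+\mathcal{O}(1)\to-\infty$ for $k\ge1$; hence $E\to0$, and $(\mathrm{B})$ holds \textit{a.a.} by Markov's inequality.

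The crux, then, is to recognize that the only regime in which $\overline{X(k+1)}$ is \textit{a.a.} nonempty while $\mathbb{E}[\,|\overline{X(k+1)}|\,]=o(n)$ forces $p=\Theta\!\big((\log n/n)^{1/(k+2)}\big)$, so that the positive correlation between two nearby missing $(k+1)$-faces supplies exactly the decay needed to keep the first moment $o(1)$. Everything else — the subsequence reduction, the verification that the error terms above are genuinely $o(1)$ rather than $o(\log n)$, and the elementary homology and $\pi_1$ computations — should be routine. Note that the hypothesis $k\ge1$ is used both in the $k=1$ argument for $\pi_1$ and in the final inequality $\log E\le (1-k)\log n-2\omega(1)+\mathcal{O}(1)$.
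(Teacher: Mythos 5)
The statement you are proving is stated in the paper only as an open conjecture (Conjecture~4.1); the paper gives no proof, so your argument would resolve it. On reading it closely, I believe the overall strategy is sound and essentially complete, and it is genuinely different from the paper's eigenvalue-based method: rather than pushing the spectral machinery further, you replace the Laplacian estimate for $\widetilde{H}^k$ by two purely combinatorial events $(\mathrm{A})$ and $(\mathrm{B})$, which let you compute $\widetilde{H}_k$ (and $\pi_1$ when $k=1$) over $\mathbb{Z}$ directly. This is actually necessary: the paper's Theorem~\ref{thm::main3} only gives $\widetilde{H}^k(X;\mathbb{Z})=0$, which by universal coefficients kills only the free part of $H_k(X;\mathbb{Z})$, so one cannot conclude $k$-connectedness from it via Hurewicz without controlling torsion. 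Your argument sidesteps this entirely by proving $H_k(X;\mathbb{Z})=0$ via the tetrahedral boundary relation under $(\mathrm{B})$.

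The reduction to $s=1$ via Lemma~\ref{lem::Order} with $a=k-s+3$, $b=k+2$ is correct (one checks $1\le a\le b$ from $k\ge s\ge1$, and $b-a=s-1$). The verification that $(\mathrm{A})$ holds a.a.\ is correct. The homological step under $(\mathrm{A})$ and $(\mathrm{B})$ is correct: for $\tau\in\overline{X(k+1)}$ and any $v\notin\tau$, the relation $\partial\partial(\tau\cup\{v\})=0$ writes $\partial\tau$ as a $\mathbb{Z}$-combination of $\partial\bigl((\tau\setminus\{u\})\cup\{v\}\bigr)$, and each such $(k+2)$-set intersects $\tau$ in $k+1$ vertices, hence by $(\mathrm{B})$ lies in $X(k+1)$. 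The $\pi_1$ argument for $k=1$ likewise works: under $(\mathrm{A})$ the $1$-skeleton is $K_n$, the presentation complex has generators $g_{ij}$ ($1\notin\{i,j\}$) killed either by a present triangle $\{1,i,j\}$ or, if $\{1,i,j\}\notin X(2)$, by combining the three triangles of the tetrahedron $\{1,i,j,v\}$, which $(\mathrm{B})$ forces into $X(2)$.

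The first-moment bound for $(\mathrm{B})$ is where I would urge care, and there is one small computational error. The events ``$a$ is a common neighbor of $\tau'$'' and ``$b$ is a common neighbor of $\tau$'' both involve the edge $\{a,b\}$, so they are not independent; the correct joint probability is $1-2p^{k+2}+p^{2k+3}$, not $(1-p^{k+2})^2=1-2p^{k+2}+p^{2k+4}$. The discrepancy is $p^{2k+3}(1-p)$, which is $o(p^{k+2})$ in the regime $p\to0$ and therefore does not affect the asymptotic order, but your displayed formula for $E$ is off by a factor of $1+O(p^{2k+3})$. With that corrected, the computation $\log E\le(k+3)\log n-2np^{k+2}+O(1)$ in the regime $\mu$ bounded below (which as you note forces $p\to0$, $np^{k+2}\le(k+2)\log n+O(1)$, and $np^{k+3}=o(1)$) combined with $np^{k+2}\ge(k+1)\log n+\omega(1)$ from $\mu=o(n)$ gives $\log E\to-\infty$ precisely because $k\ge1$, so $E\to0$ and $(\mathrm{B})$ holds a.a.\ by Markov. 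The dichotomy (either $\mu\to0$, whence $\overline{X(k+1)}=\emptyset$ a.a.\ and everything is trivial, or $\mu$ is bounded below on a subsequence) closes the argument. In short: the approach is correct and, unlike the paper's spectral route, strong enough to yield the asserted connectivity; the only fix needed is the minor probability formula for the pair $(a,b)$.
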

	
	We also conjecture that the result of Theorem \ref{thm::main3} can be extended to the case $s=0$.
	\begin{conjecture}
		Let $k \geq 0$. If $\binom{n}{k+2}(1-p^{k+2})^{n-k-2} = o(1)$, then \textit{a.a.} $\widetilde{H}^{k+1}(\mathcal{N}[G(n,p)]) = 0$.
	\end{conjecture}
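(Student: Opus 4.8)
The plan is to bootstrap from Kahle's connectivity theorem (Theorem~\ref{thm::Kahle}) via the cohomological dimension bound of Corollary~\ref{cor::1}, after first reducing the general statement to the case $s=1$. For the reduction I would record the elementary estimate that, for fixed integers $1\le a\le b$, $\binom{n}{a}(1-p^{a})^{n-a}=\mathcal{O}(n^{-(b-a)})\,\binom{n}{b}(1-p^{b})^{n-b}$; this follows by double counting the missing faces of $X:=\mathcal{N}[G(n,p)]$, comparing the expected sizes of $\overline{X(i)}:=\binom{V}{i+1}\setminus X(i)$ for consecutive $i$ (each element of $\overline{X(i)}$ lies in exactly $n-i-1$ elements of $\overline{X(i+1)}$, and each element of $\overline{X(i+1)}$ contains at most $i+2$ elements of $\overline{X(i)}$, so $|\overline{X(i)}|/|\overline{X(i+1)}|=\mathcal{O}(1/n)$, and one iterates). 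Applying this with $b=k+2$ and $a=k-s+3$ turns the hypothesis $\binom{n}{k+2}(1-p^{k+2})^{n-k-2}=\operatorname{o}(n^{s})$ into $\binom{n}{k'+2}(1-p^{k'+2})^{n-k'-2}=\operatorname{o}(n)$ with $k':=k-s+1\ge 1$, and the two conclusions for $(k,s)$ are precisely those for $(k',1)$. Hence it suffices to treat $s=1$.

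For the base case, assume $k\ge 1$ and $\binom{n}{k+2}(1-p^{k+2})^{n-k-2}=\operatorname{o}(n)$. The estimate above gives $\binom{n}{k+1}(1-p^{k+1})^{n-k-1}=\operatorname{o}(1)$, so Theorem~\ref{thm::Kahle} shows that $X$ is \textit{a.a.} $(k-1)$-connected, and the Hurewicz theorem then gives \textit{a.a.} $\widetilde{H}_{k-1}(X)=0$. The substantive step is to prove \textit{a.a.} $\widetilde{H}^{k}(X;\mathbb{R})=0$ through Corollary~\ref{cor::1}. First I would note the combinatorial inequality $\sum_{j=0}^{k+1}(j+1)\,|\sigma[j]|\le (k+2)\,|\overline{X(k+1)}|$ for every $\sigma\in X(k)$: each vertex contributing to $\bigcup_{j}\sigma[j]$ yields a distinct missing $(k+1)$-face $\sigma\cup\{u\}$, so $\sum_{j}|\sigma[j]|\le|\overline{X(k+1)}|$, and $j+1\le k+2$. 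Taking the maximum over $\sigma$ and then expectations, and using $\mathbb{E}\bigl[|\overline{X(k+1)}|\bigr]=\binom{n}{k+2}(1-p^{k+2})^{n-k-2}=\operatorname{o}(n)$, gives $\mathbb{E}[\Delta]=\operatorname{o}(n)$ where $\Delta:=\max_{\sigma\in X(k)}\sum_{j=0}^{k+1}(j+1)|\sigma[j]|$; Markov's inequality then yields \textit{a.a.} $\Delta<n$. Separately, $\mathbb{E}\bigl[|\overline{X(1)}|\bigr]=\binom{n}{2}(1-p^{2})^{n-2}=\operatorname{o}(1)$ (by the same estimate, using $k\ge1$), so \textit{a.a.} $G_X=K_n$, whence $L(G_X)+J=nI$ and $\sum_{i\in A}\lambda_i(L(G_X)+J)=(k+1)n$ for every $A\in\binom{[n]}{k+1}$. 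On the \textit{a.a.} event $\{\Delta<n\}\cap\{G_X=K_n\}$ we have $(k+1)n>kn+\Delta$, so Corollary~\ref{cor::1} forces $\widetilde{H}^{k}(X;\mathbb{R})=0$.

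To conclude, I would pass to integral coefficients on the intersection of the three \textit{a.a.} events above. From \textit{a.a.} $\widetilde{H}^{k}(X;\mathbb{R})=0$ we get $\widetilde{H}_{k}(X;\mathbb{Z})\otimes\mathbb{R}=0$, so $\widetilde{H}_{k}(X;\mathbb{Z})$ is finite and $\operatorname{Hom}(\widetilde{H}_{k}(X;\mathbb{Z}),\mathbb{Z})=0$; together with $\operatorname{Ext}(\widetilde{H}_{k-1}(X;\mathbb{Z}),\mathbb{Z})=0$ (from \textit{a.a.} $\widetilde{H}_{k-1}(X)=0$), the universal coefficient theorem gives \textit{a.a.} $\widetilde{H}^{k}(X)=0$. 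Undoing the reduction, $\mathcal{N}[G(n,p)]$ is \textit{a.a.} $(k-s)$-connected and \textit{a.a.} $\widetilde{H}^{k-s+1}(\mathcal{N}[G(n,p)])=0$.

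The main obstacle is less a single hard argument than ensuring that the ``obstruction count'' in Corollary~\ref{cor::1} collapses exactly: one must check that $G_X$ complete makes the eigenvalue sum precisely $(k+1)n$, that the correction term is precisely $\Delta$, and that $\mathbb{E}[\Delta]=\operatorname{o}(n)$ --- which is why the base-case hypothesis must be $\operatorname{o}(n)$ and why the slack $n^{s}$ converts into exactly $s$ extra dimensions. The other delicate point is that Corollary~\ref{cor::1} only bounds cohomology over $\mathbb{R}$, so the integral vanishing genuinely needs the homological consequence of Kahle's theorem routed through the universal coefficient theorem rather than coming for free.
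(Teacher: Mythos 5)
You have not proved the conjecture; what you have written is, essentially verbatim, the paper's proof of Theorem~\ref{thm::main3}. The conjecture is precisely the boundary case $s=0$ of that theorem (with indices shifted: $\widetilde{H}^{k+1}$ here plays the role of $\widetilde{H}^{k-s+1}$), and that is exactly where your argument breaks down. The paper leaves this case open in Section~\ref{section::4} for a reason.

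Two steps of your proposal fail when $s=0$. First, the reduction via Lemma~\ref{lem::Order} takes $a=k-s+3$ and $b=k+2$, which requires $a\le b$, i.e.\ $s\ge 1$; for $s=0$ you would have $a=k+3>b$, and the lemma does not apply. Second, and more fundamentally, to invoke Corollary~\ref{cor::1} at cohomological degree $k+1$ you must control
\[
\Delta(k+1)=\max_{\sigma\in X(k+1)}\sum_{j=0}^{k+2}(j+1)\,|\sigma[j]|,
\]
whose expectation you would bound through $\mathbb{E}\bigl[|\overline{X(k+2)}|\bigr]=\binom{n}{k+3}(1-p^{k+3})^{n-k-3}$. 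But Lemma~\ref{lem::Order} is a one-sided estimate: it says
\[
\binom{n}{k+2}(1-p^{k+2})^{n-k-2}=\mathcal{O}(n^{-1})\,\binom{n}{k+3}(1-p^{k+3})^{n-k-3},
\]
so the hypothesis $\binom{n}{k+2}(1-p^{k+2})^{n-k-2}=o(1)$ gives a \emph{lower} bound on $\binom{n}{k+3}(1-p^{k+3})^{n-k-3}$, not an upper bound, and hence cannot yield $\mathbb{E}[\Delta(k+1)]=o(n)$. In words, the double-counting chain $|\overline{X(a)}|\lesssim n^{-1}|\overline{X(a+1)}|$ only propagates smallness downward in dimension; knowing that $|\overline{X(k+1)}|$ is small a.a.\ tells you nothing about $|\overline{X(k+2)}|$. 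Your concluding sentence even acknowledges that the slack $n^{s}$ is what pays for the $s$ extra degrees, but when $s=0$ there is no slack left. A genuinely new idea is required to cross this boundary; the proof of Theorem~\ref{thm::main3} does not extend to it.
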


	\section*{Declaration of Interest Statement}
	
	The authors declare that they have no known competing financial interests or personal relationships that could have appeared to influence the work reported in this paper.

	\section*{Acknowledgements}
	
	The authors are grateful to Professor Alan Lew for his valuable comments and helpful suggestions. X. Huang was supported by the National Natural Science Foundation of China (Grant No. 12471324) and the Natural Science Foundation of Shanghai (Grant No. 24ZR1415500). J.-X. Zhou was supported by the National Natural Science Foundation of China (Grants Nos. 12425111, 12071023, 12331013, 12161141005).
	
	\section*{Data Availability}
	No data was used for the research described in the article.

\end{document}